\documentclass[11pt]{article}
\usepackage{fullpage}
\usepackage{amsmath,amssymb,amsthm,mathtools}
\usepackage{microtype,booktabs,tabularx,array,enumitem,needspace}
\usepackage{algorithm,algpseudocode}
\usepackage[numbers,sort&compress]{natbib}
\usepackage{hyperref}
\usepackage[nameinlink,noabbrev]{cleveref}

\newcommand{\bbE}{\mathbb{E}}

\newcommand{\bbP}{\mathbb{P}}

\newcommand{\bbR}{\mathbb{R}}

\newcommand{\cO}{\mathcal{O}}

\newcommand{\rmd}{\mathrm{d}}

\renewcommand{\leq}{\leqslant}
\renewcommand{\geq}{\geqslant}

\numberwithin{equation}{section}
\newtheorem{theorem}{Theorem}[section]
\newtheorem{lemma}{Lemma}

\newtheorem{corollary}{Corollary}
\theoremstyle{definition}
\newtheorem{definition}{Definition}
\newtheorem{assumption}{Assumption}
\theoremstyle{remark}

\crefname{theorem}{Theorem}{Theorems}
\crefname{lemma}{Lemma}{Lemmas}
\crefname{proposition}{Proposition}{Propositions}
\crefname{corollary}{Corollary}{Corollaries}
\crefname{definition}{Definition}{Definitions}
\crefname{assumption}{Assumption}{Assumptions}
\crefname{algorithm}{Algorithm}{Algorithms}
\setlist{itemsep=2pt,topsep=4pt}
\newcommand{\eps}{\varepsilon}
\newcommand{\norm}[1]{\left\lVert#1\right\rVert}

\newcommand{\lammin}{\lambda_{\min}}
\title{\textbf{Matching Complexity for Second-Order Stationary Point with Sample Reuse}}
\author{}
\date{}

\newcommand{\authornames}{Qian Chen, Wendao Wu, Haihan Zhang, Chenheng Zhang, Chunyuan Zheng, Cong Fang, Haoxuan Li, Zhouchen Lin}
\newcommand{\affiliation}{Peking University}

\newcommand{\mail}[1]{\href{mailto:#1}{\texttt{#1}}}
\newif\ifanonymous
\anonymousfalse
\ifanonymous
\author{Anonymous Authors}
\hypersetup{pdfauthor={Anonymous Authors}}
\else
\author{Qian Chen$^{1,*}$\quad Wendao Wu$^{1,*}$\quad Haihan Zhang$^{1,*}$ \\[0.20em]
Chenheng Zhang$^{1,*}$\quad Chunyuan Zheng$^1$\\[0.20em]
Cong Fang$^{1,\dagger}$\quad Haoxuan Li$^{1,\dagger}$\quad Zhouchen Lin$^{1,\dagger}$\\[0.60em]
\small $^1$\affiliation\\[0.35em]
\footnotesize\mail{colombo.2026@outlook.com}\\[0.35em]
\small $^*$Equal contribution.\quad $^\dagger$Corresponding authors.}
\hypersetup{pdfauthor={\authornames}}
\fi

\newcommand{\ResearchAgentSystem}{Colombo, our laboratory's internal auto-research system}

\begin{document}
\maketitle
\begin{abstract}
We consider stochastic nonconvex optimization with sample-wise Lipschitz gradients and Hessians, and reusable opaque sample handles. Gradient and complete-Hessian evaluations are charged separately, including both endpoints of a same-sample difference. For fixed positive problem parameters and in the high-dimensional worst case, we prove that the minimax total derivative-query complexity of finding an $(\varepsilon,\gamma)$-second-order stationary point is
\begin{equation*}
\widetilde\Theta(\varepsilon^{-3}+\gamma^{-3}).
\end{equation*}
The upper bound combines curvature-first randomly signed negative-curvature steps with binary-ancestor derivative estimators; a truncated increment estimate charges predictable drift linearly and martingale motion quadratically. The curvature lower bound uses a deterministic smooth bounded-gradient chain, a radial barrier, and a random rotation. The new first-order lower bound is specifically robust to persistent sample reuse and to complete-Hessian access. It uses a slowly parameterized conservative sign chain, a third-order reverse stick-breaking localization inequality, and stateful relative-entropy accounting for reusable handles. Thus the total rate is near-optimally matched for every relation between $\varepsilon$ and $\gamma$.
\end{abstract}

\noindent\textbf{Keywords:} stochastic nonconvex optimization; sample reuse; second-order stationarity; oracle complexity; randomized lower bounds.

\par\medskip
{\small
\begin{samepage}
\noindent\textbf{AI Usage.}
Nearly the entire research pipeline for this paper was carried out by
\ResearchAgentSystem{}, powered by GPT-5.6 Sol. The system also conducted a
Lean-backed article audit of the resulting manuscript. The authors subsequently reviewed and approved the
mathematical claims, presentation, and formal artifacts, and take
responsibility for the final manuscript. The complete Lean audit report and the system's technical report will be made public at a later date.

\paragraph{Publication Intent and Recognition of Independent Work.}
This preprint is released as a research output of our study of
automated scientific research, for public scrutiny and discussion.
As we do not intend to submit this manuscript to a journal or
conference, it has not undergone the full editorial refinement
and internal review we would normally require for submission.
The authors remain responsible for its contents and for correcting
any errors or inaccurate attribution.

We recognize that releasing these results may affect researchers
who have independently been pursuing the same questions, including
work that is not yet public.
If this applies to your work, we invite you to contact us at
\texttt{colombo.2026@outlook.com}.
A brief description of the overlap, together with any dated
materials you are comfortable sharing, would help us understand
its scope and chronology.
Such materials might include draft excerpts, presentation slides,
or revision records; a complete manuscript or unpublished proof
is not required to begin the conversation.

We will review these cases carefully and update citations,
acknowledgments, and accounts of independent or concurrent work
as appropriate, including revising priority claims where warranted.
Nonpublic materials will be treated confidentially and used only
to assess the overlap unless their authors authorize otherwise.
We will obtain explicit permission from the authors before
publicly discussing such materials or providing them to an
automated research system.
\par
\end{samepage}
}

\section{Introduction}\label{sec:intro}
We study the stochastic nonconvex optimization problem
\begin{equation*}
    \min_{x\in\bbR^d} f(x),\qquad f(x):=\bbE_{\xi}F(x;\xi),
\end{equation*}
where $\xi$ is a random sample, and $F(\cdot;\xi)$ is a twice continuously differentiable sample loss with Lipschitz gradient and Hessian. An oracle evaluates the gradient or complete Hessian of a selected sample loss at a query point. Our goal is to find an $(\eps,\gamma)$-second-order stationary point (SOSP), namely a point $x$ satisfying
\begin{equation*}
    \norm{\nabla f(x)}\le\eps,\qquad
    \lambda_{\min}(\nabla^2f(x))\ge-\gamma.
\end{equation*}
These conditions provide a tractable approximate local optimality criterion: they exclude a large gradient and a direction of sufficiently negative curvature~\cite{nesterov2006cubic}. Their significance is particularly clear under a quantitative strict-saddle condition, under which every point outside neighborhoods of local minima has either a gradient bounded away from zero or a negative Hessian eigenvalue bounded away from zero. Sufficiently accurate SOSPs must then lie near local minima~\cite[Definition~5]{ge2015escaping}. We do not impose this additional landscape condition, and an SOSP need not be a local minimum for a general nonconvex objective. Keeping $\eps$ and $\gamma$ independent distinguishes the cost of reducing the gradient from that of controlling negative curvature.

Existing complexity bounds depend on both the requested tolerances and the available sample access; Table~\ref{tab:comparison} summarizes the relevant guarantees. SPIDER achieves $\widetilde{\cO}(\eps^{-3}+\eps^{-2}\gamma^{-2}+\gamma^{-5})$ stochastic gradient evaluations using shared samples~\cite[Corollary~7]{fang2018spider}. Arjevani et al.~\cite[Theorem~5]{arjevani2020second} obtain the same tolerance dependence with fresh gradients and Hessian-vector products (HVPs), and Wu et al.~\cite{wu2026second} remove the mixed term to obtain $\widetilde{\cO}(\eps^{-3}+\gamma^{-5})$ in a fresh common-random-loss model. Under uniform sample-wise smoothness, Carme~\cite[Theorem~1]{shi2026curvature} attains expected gradient and Hessian costs of $\widetilde{\cO}(\eps^{-3}+\eps^{-2}\gamma^{-2}+\gamma^{-3})$ and $\widetilde{\cO}(\eps^{-2}+\gamma^{-4})$, respectively. These results already yield the $\widetilde{\cO}(\eps^{-3})$ total rate at the standard coupled target $\gamma\asymp\sqrt\eps$, but leave a larger cost when a more stringent curvature tolerance is prescribed.

The distinction between streaming access and sample reuse is central to this question. In the fresh, single-point model, each oracle call draws an independent sample that is discarded after one evaluation~\cite{arjevani2020second,wu2026second}. A multi-point query can instead evaluate a common sample at several chosen points, as in variance-reduced derivative differences~\cite{arjevani2020second,fang2018spider}. We allow the stronger persistent access in which the algorithm saves an opaque sample handle and adaptively revisits its loss throughout the run. This models the ability to retain sampled data and reevaluate their losses, while shared-sample differences already play a central role in SPIDER and recursive cubic regularization~\cite{fang2018spider,zhou2020stochastic}. Under sample-wise smoothness, their centered errors shrink with the distance between the query points, enabling derivative tracking; Lemma~\ref{lem:accuracy} makes this relation precise. Every evaluated endpoint remains charged. The question is therefore how much this access can reduce the two-tolerance complexity, and whether a lower bound survives arbitrary repeated interrogation of saved samples.

Our results characterize the total derivative-query complexity in this model. Let $N_g$ and $N_H$ denote the numbers of stochastic gradient and complete-Hessian evaluations, respectively. For the upper bound, Theorem~\ref{thm:upper} constructs an algorithm with deterministic query caps and success probability at least $2/3$ such that
\begin{equation*}
    N_g=\widetilde{\cO}(\eps^{-3}+\eps^{-2}\gamma^{-1}),\qquad
    N_H=\widetilde{\cO}(\eps^{-2}+\eps^{-1}\gamma^{-2}+\gamma^{-3}).
\end{equation*}
Consequently, $N_g+N_H=\widetilde{\cO}(\eps^{-3}+\gamma^{-3})$. For the lower bound, Theorem~\ref{thm:lower} constructs admissible instances requiring $\widetilde\Omega(\eps^{-3}+\gamma^{-3})$ total calls from any randomized adaptive algorithm, even with persistent sample reuse and joint value, gradient, and complete-Hessian responses. Together, these results imply
\begin{equation}\label{eq:intro-total}
    Q_{\mathrm{tot}}^\star(\eps,\gamma)
    =\widetilde\Theta(\eps^{-3}+\gamma^{-3}).
\end{equation}
Here the problem parameters are fixed and positive, and tildes suppress logarithmic factors. The upper bound holds in every finite dimension; the matching lower bound permits dimension to grow polynomially with inverse tolerances. This is a characterization of total evaluations, while the optimal tradeoff between the two separate budgets remains open.

For the upper bound, Algorithm~\ref{alg:curvature-first} tests the Hessian before updating the gradient. If it detects negative curvature, it takes a step with an independent random sign; otherwise it estimates the gradient and either terminates or takes a normalized gradient step. Two binary-ancestor estimators cache shared-sample differences, so each derivative estimate contains only logarithmically many estimation errors. The central cost estimate, Lemma~\ref{lem:charge}, separates predictable gradient displacements from the martingale displacements of randomly signed curvature steps. It charges the former by total length and the latter by the sum of squared lengths. Expected descent bounds these quantities by $\cO(\Delta/\eps)$ and $\cO(\Delta/\gamma)$, respectively. Applying the estimate to both derivative fields, with gradient updates restricted to the subsequence that passes the curvature test, yields the stated budgets. Weighted arithmetic--geometric mean inequalities absorb the mixed terms into $\eps^{-3}+\gamma^{-3}$; Section~\ref{sec:upper} contains the proof.

For the lower bound, we construct two distinct families. The first-order family encodes independent hidden sign vectors in a chain of stages, each of which must be activated before an $\eps$-stationary point can be reached. With a logarithmic block size $D$, there are $\Theta(\Delta/(\eps D^{1/4}))$ stages, and a relative-entropy argument forces $\Omega(s_\star^2/\eps^2)$ charged observations per stage, where $s_\star=\min\{1,\sigma_g,\sigma_H,L_1,L_2,G\}$. This yields $\Omega(\Delta s_\star^2/(\eps^3D^{1/4}))$ total calls; Section~\ref{sec:firstorder-lb} proves the stagewise information bound. The curvature family is deterministic. A smooth chain with $\Theta(\gamma^{-3})$ links retains a negative eigenvalue below $-\gamma$ until all links have been completed. Flat gates, a random rotation, and a bounded-gradient radial wall prevent unrestricted queries or distant outputs from bypassing the chain, even when a full Hessian is returned (Section~\ref{sec:curvature-lb}). Taking the maximum of these two endpoint lower bounds establishes their sum up to constants and logarithms.

The additional work required by sample reuse concerns both estimation and information control. Dyadic tracking, the separation of drift from randomly signed curvature motion, and conservative scalar noise localization have direct predecessors in Wu et al.~\cite{wu2026second}; the present upper bound extends the path-cost argument to shared-sample Hessian differences under uniform sample-wise regularity. For the lower bound, a fresh-response entropy calculation cannot be charged again whenever an old sample is queried. We represent each sample by a fixed coefficient table and strengthen the oracle to reveal the relevant block of that table on its first use. Later queries involving the same handle and block are deterministic functions of the revealed coefficients, and contribute no additional conditional relative entropy; a new block of the same handle is accounted for separately. We also extend the smooth localization estimate through third order and choose the coordinate scale $D^{1/4}$ so that every sampled scalar loss has a uniformly Lipschitz Hessian, independently of chain length. These two steps preserve the first-order obstruction under persistent reuse, at the cost of only a logarithmic factor. Our upper bound itself needs only two evaluations of each newly drawn sample, whereas the lower bound covers unlimited persistent reuse.

\begin{table}[t]
\centering\small
\begin{tabular}{lll}
\toprule
Algorithm / reference & Bound & Oracle\\
\midrule
SPIDER-SFO$^+$~\cite{fang2018spider}
& $\widetilde{\cO}(\eps^{-3}+\eps^{-2}\gamma^{-2}+\gamma^{-5})$
& FO; two-point\\
\midrule
\citet[Alg.~4]{arjevani2020second}
& $\widetilde{\cO}(\eps^{-3}+\eps^{-2}\gamma^{-2}+\gamma^{-5})$
& FO + HVP; one-point\\
\midrule
\citet[Thm.~3.3]{wu2026second} (upper)
& $\widetilde{\cO}(\eps^{-3}+\gamma^{-5})$
& FO + HVP; one-point\\
\citet[Thm.~3.4]{wu2026second} (lower)
& $\widetilde\Omega(\eps^{-3}+\gamma^{-5})$
& Joint SO; one-point\\
\midrule
Carme~\cite{shi2026curvature}
& $\begin{aligned} &\bbE N_g=\widetilde{\cO}(\eps^{-3}+\eps^{-2}\gamma^{-2}+\gamma^{-3}),\\ &\bbE N_H=\widetilde{\cO}(\eps^{-2}+\gamma^{-4}) \end{aligned}$
& SO; two-point\\
\midrule
Algorithm~\ref{alg:curvature-first} (ours, upper)
& $\begin{aligned} &N_g=\widetilde{\cO}(\eps^{-3}+\eps^{-2}\gamma^{-1}),\\ &N_H=\widetilde{\cO}(\eps^{-2}+\eps^{-1}\gamma^{-2}+\gamma^{-3}) \end{aligned}$
& SO; two-point\\
Theorem~\ref{thm:lower} (ours, lower)
& $\widetilde\Omega(\eps^{-3}+\gamma^{-3})$
& Joint SO; arbitrary reuse\\
\bottomrule
\end{tabular}

\smallskip
\begin{minipage}{\textwidth}
\footnotesize
\textit{Oracle notation.} FO: stochastic gradient; HVP: Hessian-vector product; SO: gradient and complete Hessian, charged separately; Joint SO: a joint second-order oracle returning the value, gradient, and complete Hessian of the same sample loss at the same point, charged as one call. One-point: a fresh sample per call; two-point: a fresh sample evaluated at two prescribed points, with both endpoints charged; arbitrary reuse: unlimited adaptive revisits to stored samples. $N_g$ and $N_H$ count gradient and complete-Hessian evaluations.
\end{minipage}
\caption{Two-tolerance query bounds for fixed positive scale parameters. Tildes suppress logarithmic factors, including dimension factors; constant tolerance rescalings do not affect the displayed orders. Assumptions differ across papers; see Section~\ref{sec:related}.}
\label{tab:comparison}
\end{table}
\paragraph{Notation.}
We use $\norm{\cdot}$ for the Euclidean norm of vectors and the induced operator norm of matrices and multilinear maps; an explicit subscript $p$ denotes the vector $\ell_p$ norm. The symbols $\langle\cdot,\cdot\rangle$ and $(\cdot)^\top$ denote the Euclidean inner product and transpose, respectively. For a symmetric matrix $A$, $\lammin(A)$ denotes its smallest eigenvalue. We write $D^k h$ for the $k$th derivative of $h$ and $\operatorname{Lip}(A)$ for the Lipschitz constant of a map $A$ in these norms. The symbols $\bbE$ and $\bbP$ denote expectation and probability. We use $\cO$, $\Omega$, and $\Theta$ for asymptotic bounds; their tilded versions additionally suppress logarithmic factors, including those in the dimension and inverse tolerances.

\section{Related Work}\label{sec:related}
\paragraph{Upper bounds for SOSP.}
Cubic regularization provides a classical deterministic approach to second-order stationarity~\cite{nesterov2006cubic}. SPIDER-SFO$^+$~\cite[Corollary~7]{fang2018spider} uses shared-sample gradient differences to attain $\widetilde{\cO}(\eps^{-3}+\eps^{-2}\gamma^{-2}+\gamma^{-5})$ queries. Arjevani et al.~\cite[Theorem~5]{arjevani2020second} attain the same tolerance dependence with fresh gradients and HVPs from general stochastic derivative estimators. Their theorem requires sufficiently small tolerances with $\gamma\le\sqrt{L_2\eps}$; a looser curvature target can be met by running it at $\sqrt{L_2\eps}$. Wu et al.~\cite[Theorem~3.3]{wu2026second} improve the fresh common-random-loss bound to $\widetilde{\cO}(\eps^{-3}+\gamma^{-5})$ under population Hessian regularity, using dyadic gradient tracking and separate accounting for drift and random curvature motion. With reusable samples, Zhou and Gu~\cite{zhou2020stochastic} develop recursive gradient and Hessian estimators for finite-sum cubic regularization at the coupled target $(\eps,\sqrt\eps)$, while Chayti, Doikov, and Jaggi~\cite{chayti2023unified} unify the analysis of stochastic and variance-reduced cubic Newton methods. Carme~\cite[Theorem~1]{shi2026curvature} treats independent tolerances using two-point gradient and complete-Hessian differences under bounded derivative noise and uniform sample-wise smoothness. Its separate derivative budgets, together with the preceding bounds, appear in Table~\ref{tab:comparison}. Under the assumptions in Section~\ref{sec:setup}, our curvature-first analysis reduces the mixed gradient term and the curvature cost of Hessian estimation; Theorem~\ref{thm:upper} states the resulting budgets.

\paragraph{Lower bounds for SOSP.}
Carmon et al.~\cite{carmon2020lower} establish an $\Omega(\eps^{-3/2})$ lower bound for finding a point with $\norm{\nabla f(x)}\le\eps$ on deterministic functions with Lipschitz Hessians, even for randomized algorithms with exact derivative information. In the stochastic setting, Arjevani et al.~\cite[Theorems~4 and~6]{arjevani2020second} prove an $\Omega(\eps^{-3}+\gamma^{-5})$ lower bound for zero-respecting second-order methods at fixed positive scale parameters. Their noiseless curvature bound specializes to $\Omega(\gamma^{-3})$ for zero-respecting second-order methods~\cite[Theorem~6]{arjevani2020second}. Their stochastic oracle permits general unbiased derivative estimators, without requiring derivatives of a common scalar sample loss. Their first-order lower bound also extends to simultaneous multi-point queries~\cite[Section~3.2]{arjevani2020second}, a model distinct from arbitrary persistent reuse. Wu et al.~\cite[Theorem~3.4]{wu2026second} establish $\widetilde\Omega(\eps^{-3}+\gamma^{-5})$ for arbitrary randomized adaptive algorithms with fresh joint value, gradient, and complete-Hessian responses. This model additionally assumes bounded function-value variance and does not prescribe a uniform sample Hessian Lipschitz constant. These results do not directly establish a lower bound under our combination of persistent reuse and uniform sample-wise smoothness. Theorem~\ref{thm:lower} addresses this setting and proves $\widetilde\Omega(\eps^{-3}+\gamma^{-3})$ total queries, including for the stronger joint oracle.

\paragraph{Hidden chains and concentration tools.}
Hidden chains and random rotations are central to the lower bounds of Carmon et al.~\cite{carmon2020lower}; Arjevani et al.~\cite[Appendix~H]{arjevani2020second} develop stochastic derivative constructions based on probabilistic zero-chains and negative-curvature obstructions. Wu et al.~\cite{wu2026second} localize noise in a scalar sample loss so that its value, gradient, and Hessian form a consistent joint response. Our curvature construction combines a smooth hidden chain with a bounded-gradient radial extension to control unrestricted queries and distant outputs (Section~\ref{sec:curvature-lb}). For the stochastic construction, we extend scalar noise localization through third order to enforce uniform sample Hessian regularity, and account for information only when a stored sample's relevant coefficient block is first revealed. Lemmas~\ref{lem:thirdstick} and~\ref{lem:persistent-info} establish these two properties, including under repeated adaptive queries to the same sample.

\section{Problem Formulation}\label{sec:setup}
Let $\xi$ be a random sample, and let $F(x;\xi)$ be a measurable scalar sample loss on $\bbR^d$. We consider
\begin{equation}\label{eq:problem}
    \min_{x\in\bbR^d} f(x),\qquad
    f(x):=\bbE_{\xi}F(x;\xi),
\end{equation}
where the expectation is finite at every $x$. The algorithm starts from a specified point $x_0\in\bbR^d$. The tolerances $\eps,\gamma>0$ are independent.

\begin{definition}[Second-order stationarity point]\label{def:sosp}
A point $x\in\bbR^d$ is an $(\eps,\gamma)$-second-order stationary point of $f$ if
\begin{equation*}
    \norm{\nabla f(x)}\le\eps,\qquad
    \lammin(\nabla^2f(x))\ge-\gamma.
\end{equation*}
\end{definition}
We seek a randomized algorithm whose output $\hat x$ satisfies
\begin{equation}\label{eq:success}
    \bbP\left(\norm{\nabla f(\hat x)}\leq\eps,\quad
    \lammin(\nabla^2f(\hat x))\geq-\gamma\right)\geq\frac23
\end{equation}
on every admissible instance. The probability includes both the algorithm's internal randomness and its sampled losses. A failure symbol $\bot$ is permitted, but counts as failure in~\eqref{eq:success}.

\begin{definition}[Reusable-sample derivative oracle]\label{def:oracle}
An instance specifies a scalar random loss $F(\cdot;\xi)$. The algorithm can request a fresh opaque handle $r$. Conditional on the fixed instance, the oracle assigns to $r$ a copy $\xi_r$ of $\xi$, independent of the previous history and of the algorithm's random seed. The handle reveals neither $\xi_r$ nor a description of $F(\cdot;\xi_r)$. At any later time, a query selects a point $x\in\bbR^d$, a saved or newly requested handle $r$, and one of the following response types:
\begin{equation*}
    \mathcal O_g(x,r)=\nabla F(x;\xi_r),\qquad
    \mathcal O_H(x,r)=\nabla^2F(x;\xi_r).
\end{equation*}
The first returns the complete gradient and costs one gradient evaluation; the second returns the complete Hessian and costs one Hessian evaluation. A handle always retains its original sample, and may be reused arbitrarily many times at adaptively selected points. Requesting a handle supplies no numerical observation and incurs no additional charge beyond its subsequent evaluations.
\end{definition}
Let $N_g$ and $N_H$ be the numbers of gradient and Hessian evaluations, and let $Q=N_g+N_H$ be the total query count. Querying both derivatives at the same point costs two evaluations. A same-sample two-point difference also costs two evaluations of the corresponding type. Function values and third derivatives are unavailable in Definition~\ref{def:oracle}. Computation on observed responses, including matrix-vector products and eigenvector calculations, is free. All query and output maps are measurable functions of the preceding transcript and private randomness, and may select arbitrary points of $\bbR^d$; the output need not have been queried.

\subsection{Assumptions}\label{sec:assumptions}
We treat problem parameters $\Delta,L_1,L_2,\sigma_g,\sigma_H,G$ as constants. An admissible instance satisfies the following assumptions.
\begin{assumption}[Initial objective gap]\label{ass:gap}
The population objective is bounded below, and the initial point satisfies
\begin{equation*}
    \inf_{x\in\bbR^d}f(x)>-\infty,\qquad
    f(x_0)-\inf_{x\in\bbR^d}f(x)\le\Delta.
\end{equation*}
\end{assumption}
\begin{assumption}[Uniform sample smoothness]\label{ass:smoothness}
Almost surely in $\xi$, $F(\cdot;\xi)\in C^2(\bbR^d)$ and the following bounds hold simultaneously for all $x,y\in\bbR^d$:
\begin{equation*}
    \norm{\nabla F(x;\xi)-\nabla F(y;\xi)}\le L_1\norm{x-y},\qquad
    \norm{\nabla^2F(x;\xi)-\nabla^2F(y;\xi)}\le L_2\norm{x-y}.
\end{equation*}
\end{assumption}
\begin{assumption}[Unbiased derivatives]\label{ass:unbiased}
The population objective $f$ belongs to $C^2(\bbR^d)$, and differentiation commutes with expectation through order two:
\begin{equation*}
    \bbE_\xi\nabla F(x;\xi)=\nabla f(x),\qquad
    \bbE_\xi\nabla^2F(x;\xi)=\nabla^2f(x)
    \quad\text{for every }x\in\bbR^d.
\end{equation*}
\end{assumption}
\begin{assumption}[Uniform derivative noise bounds]\label{ass:noise}
Almost surely in $\xi$, the following bounds hold simultaneously for all $x\in\bbR^d$:
\begin{equation*}
    \norm{\nabla F(x;\xi)-\nabla f(x)}\le\sigma_g,\qquad
    \norm{\nabla^2F(x;\xi)-\nabla^2f(x)}\le\sigma_H.
\end{equation*}
\end{assumption}
\begin{assumption}[Bounded population gradient]\label{ass:population-gradient}
The population gradient is globally bounded:
\begin{equation*}
    \sup_{x\in\bbR^d}\norm{\nabla f(x)}\le G.
\end{equation*}
\end{assumption}
Assumptions~\ref{ass:smoothness} and~\ref{ass:unbiased}, together with Jensen's inequality, imply the same gradient and Hessian Lipschitz bounds for $f$. Assumption~\ref{ass:noise} is an almost-sure bound, stronger than a variance bound. No convexity or strict-saddle condition is required. All six public bounds remain fixed as $\eps$ and $\gamma$ vary.

For fixed $d$ and these bounds, let $Q_d^\star(\eps,\gamma)$ be the smallest deterministic total query cap of an algorithm satisfying~\eqref{eq:success} on every admissible instance under Definition~\ref{def:oracle}. The notation $Q_{\mathrm{tot}}^\star$ summarizes the upper bound for every finite $d$ and the lower bound in sufficiently large dimensions specified below. Tildes hide logarithms in $d$ and inverse tolerances, never polynomial dependence on them. Matching complexity is a high-dimensional worst-case statement; it does not assert the same lower bound in every fixed dimension or a dimension-independent supremum over arbitrarily large $d$.
\section{Main Results}\label{sec:results}
\subsection{Upper Bound}\label{sec:main-upper}
We first present the optimization algorithm. Its subroutine $\textsc{TreeEstimate}(A,x,\eta)$, specified in Algorithm~\ref{alg:tree-estimate} in Section~\ref{sec:tracking}, estimates the mean derivative field at $x$ with target error $\eta$. Each estimator retains its own checkpoint locations and cached estimates across calls: the first call averages fresh samples, and later calls add a same-sample difference to a cached ancestor estimate. The optimization algorithm maintains separate instances for gradients and Hessians, calibrated with $(\sigma,L)=(\sigma_g,L_1)$ and $(\sigma_H,L_2)$, respectively. The Hessian tree is updated at every iterate, while the gradient tree is updated only after the curvature test passes.

\begin{algorithm}[t]
\caption{Curvature-first optimization with reusable samples}\label{alg:curvature-first}
\begin{algorithmic}[1]
\Require Initial point $x_0$; tolerances $\eps,\gamma$; public problem bounds; iteration cap $T$ and query caps $Q_g,Q_H$.
\State Initialize two empty instances of $\textsc{TreeEstimate}$ (see Algorithm~\ref{alg:tree-estimate}); set $N_g=N_H=0$.
\State Use error targets $\eta_g=\eps/4$, $\eta_H=\gamma/4$ and failure probability $1/96$ per tree.
\State Charge each oracle evaluation to $N_g$ or $N_H$; return $\bot$ before exceeding its cap.
\For{$t=0,\ldots,T-1$}
    \State $H_t\gets\textsc{TreeEstimate}(\nabla^2F,x_t,\gamma/4)$.
    \If{$\lammin(H_t)<-3\gamma/4$}
        \State Choose a unit eigenvector $v_t$ associated with $\lammin(H_t)$.
        \State Draw $s_t$ uniformly from $\{-1,+1\}$, independently of the history.
        \State $x_{t+1}\gets x_t+s_t(\gamma/L_2)v_t$.
    \Else
        \State $g_t\gets\textsc{TreeEstimate}(\nabla F,x_t,\eps/4)$.
        \If{$\norm{g_t}\le3\eps/4$}
            \State \Return $x_t$.
        \EndIf
        \State $x_{t+1}\gets x_t-(\eps/(4L_1))g_t/\norm{g_t}$.
    \EndIf
\EndFor
\State \Return $\bot$.
\end{algorithmic}
\end{algorithm}
To state its guarantee with the problem parameters explicit, define
\begin{align}
    B_g={}&1+\frac{\Delta L_1}{\eps^2}+\frac{\sigma_g^2}{\eps^2}
    +\frac{\sigma_gL_1\Delta}{\eps^3}
    +\frac{L_1^2\Delta}{\eps^2\gamma},\label{eq:B_g}\\
    B_H={}&1+\frac{\Delta L_1}{\eps^2}+\frac{\sigma_H^2}{\gamma^2}
    +\frac{\sigma_HL_2\Delta}{\eps\gamma^2}
    +\frac{2L_2^2\Delta}{\gamma^3}.\label{eq:B_H}
\end{align}
The two contributions to the last term in $B_H$ come from the number of curvature checkpoints and from tracking the randomly signed curvature motion; Section~\ref{sec:upper-proof} derives them separately.

\begin{theorem}[Upper bound with deterministic query caps]\label{thm:upper}
Under Assumptions~\ref{ass:gap}--\ref{ass:population-gradient}, consider the oracle in Definition~\ref{def:oracle}. Let $d\ge1$ and $0<\eps,\gamma\le1$. There is a universal constant $C\ge1$ such that the following choices suffice for Algorithm~\ref{alg:curvature-first}:
\begin{equation*}
    T=1+\left\lceil48\Delta\left(\frac{32L_1}{3\eps^2}
    +\frac{12L_2^2}{\gamma^3}\right)\right\rceil,
    \qquad \mathcal L=1+\log\left(C(d+1)(T+1)\right),
\end{equation*}
\begin{equation*}
    Q_g=\left\lceil48C\mathcal L^4B_g\right\rceil,
    \qquad Q_H=\left\lceil48C\mathcal L^4B_H\right\rceil.
\end{equation*}
Use the batch sizes of Lemma~\ref{lem:accuracy}, with failure probability $1/96$ for each tree and the error targets in the algorithm. Then $N_g\le Q_g$ and $N_H\le Q_H$ on every execution, and its output satisfies
\begin{equation*}
    \bbP\left(\norm{\nabla f(\hat x)}\leq\eps,\quad
    \lammin(\nabla^2f(\hat x))\geq-\gamma\right)\geq\frac23.
\end{equation*}
For fixed positive problem parameters, the separate budgets simplify to
\begin{align}
    N_g&=\widetilde{\cO}(\eps^{-3}+\eps^{-2}\gamma^{-1}),\label{eq:Ng}\\
    N_H&=\widetilde{\cO}(\eps^{-2}+\eps^{-1}\gamma^{-2}+\gamma^{-3}),\label{eq:NH}
\end{align}
and hence $N_g+N_H=\widetilde{\cO}(\eps^{-3}+\gamma^{-3})$.
\end{theorem}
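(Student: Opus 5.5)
The proof separates into a success argument and a cost argument, glued by a good event. Let $\mathcal E$ be the event on which every call of either tree returns an estimate within its target: $\|H_t-\nabla^2f(x_t)\|\le\gamma/4$ and $\|g_t-\nabla f(x_t)\|\le\eps/4$ at every queried iterate. I would invoke Lemma~\ref{lem:accuracy} with failure probability $1/96$ per tree. Its union bound over at most $T$ calls is absorbed into $\mathcal L$, so $\bbP(\mathcal E)\ge 1-1/48$.

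On $\mathcal E$, correctness at termination is immediate. Returning $x_t$ requires $\lammin(H_t)\ge-3\gamma/4$, which gives $\lammin(\nabla^2f(x_t))\ge-\gamma$, and $\|g_t\|\le3\eps/4$, which gives $\|\nabla f(x_t)\|\le\eps$.

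It remains to show that, with probability at least $2/3+1/48$ overall, the algorithm stops before iteration $T$ and before hitting the caps $Q_g,Q_H$.

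\emph{Descent.} I would establish per-step descent on $\mathcal E$.
\begin{itemize}
\item For a gradient step of length $\eps/(4L_1)$, we have $\|g_t\|>3\eps/4$ and error at most $\eps/4$. So $\langle\nabla f(x_t),g_t/\|g_t\|\rangle\ge\|g_t\|-\eps/4\ge\eps/2$, and $L_1$-smoothness gives $f(x_{t+1})\le f(x_t)-\eps^2/(8L_1)+\eps^2/(32L_1)=f(x_t)-3\eps^2/(32L_1)$.
\item For a curvature step, the second-order Taylor bound with $L_2$ gives $f(x_{t+1})\le f(x_t)+s_t(\gamma/L_2)\langle\nabla f,v_t\rangle+\tfrac12(\gamma/L_2)^2 v_t^\top\nabla^2 f v_t+\tfrac{L_2}{6}(\gamma/L_2)^3$. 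The quadratic term is at most $-\gamma^3/(4L_2^2)$, because $v_t^\top\nabla^2 f v_t\le-3\gamma/4+\gamma/4$. The cubic remainder adds $\gamma^3/(6L_2^2)$, for a net decrease of $\gamma^3/(12L_2^2)$ plus a term of zero conditional mean.
\end{itemize}
To handle the fact that $\mathcal E$ is not predictable, I would run the argument on the stopped process, halting at the first failure of an estimate. The expected number of non-terminal iterations before stopping is then at most $\Delta(32L_1/(3\eps^2)+12L_2^2/\gamma^3)$. Markov's inequality with the factor $48$ bounds by $1/48$ the probability of reaching $T$. A further Markov step, with the other constant-probability allowances, controls the path quantities below. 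Summing the failure probabilities gives at most $1/3$.

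\emph{Cost, the main obstacle.} I would apply Lemma~\ref{lem:charge} separately to each tree. For the Hessian tree, the displacement sequence is the full path, which splits into two parts.
\begin{itemize}
\item Predictable gradient steps have total length $\le(\#\text{gradient steps})\cdot\eps/(4L_1)=\cO(\Delta/\eps)$ in expectation.
\item Martingale curvature steps have random signs independent of the history given $v_t$, so we charge their squared lengths: $(\#\text{curvature steps})(\gamma/L_2)^2=\cO(\Delta/\gamma)$.
\end{itemize}
Lemma~\ref{lem:charge}, with $(\sigma,L,\eta)=(\sigma_H,L_2,\gamma/4)$, then charges $\mathcal L^4$ times four contributions: $\sigma_H^2/\gamma^2$ for fresh batches; $\sigma_H L_2\cdot(\text{drift length})/\gamma^2$, giving the $\sigma_HL_2\Delta/(\eps\gamma^2)$ term; $L_2^2\cdot(\text{squared martingale length})/\gamma^2$; and the number of checkpoints. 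These yield $B_H$, where the last two items produce the two copies of $L_2^2\Delta/\gamma^3$.

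For the gradient tree, I would restrict to the subsequence of iterates that pass the curvature test. Between consecutive gradient queries, the intervening curvature steps still enter as martingale motion, contributing $L_1^2\Delta/(\gamma\eps^2)$. The gradient steps contribute drift, giving $\sigma_gL_1\Delta/\eps^3$, and the checkpoint count contributes $\Delta L_1/\eps^2$. This yields $B_g$.

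The delicate point is verifying that Lemma~\ref{lem:charge}'s hypotheses hold for the subsequence, namely that the signs remain conditionally symmetric after filtering. I would address this by noting that the filtering depends only on $H_t$, which is computed before $s_t$ is drawn. With the $48$ Markov factor, the expected costs convert into the deterministic caps $Q_g,Q_H$ failing with probability $\le1/48$ each. The algorithm returns $\bot$ rather than exceed a cap, so the bounds $N_g\le Q_g$ and $N_H\le Q_H$ hold on every execution.

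Finally, for fixed parameters, $B_g=\cO(\eps^{-3}+\eps^{-2}\gamma^{-1})$ and $B_H=\cO(\eps^{-2}+\eps^{-1}\gamma^{-2}+\gamma^{-3})$. Weighted AM--GM gives $\eps^{-2}\gamma^{-1}\le\tfrac23\eps^{-3}+\tfrac13\gamma^{-3}$ and $\eps^{-1}\gamma^{-2}\le\tfrac13\eps^{-3}+\tfrac23\gamma^{-3}$. Hence $N_g+N_H=\widetilde\cO(\eps^{-3}+\gamma^{-3})$.
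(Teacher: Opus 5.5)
Your proposal is correct and follows essentially the same route as the paper. Both use the descent constants $d_g=3\eps^2/(32L_1)$ and $d_H=\gamma^3/(12L_2^2)$ on a process stopped before its first inaccurate step; both apply \cref{lem:charge} to the Hessian tree along the full path and to the gradient tree along the stopping-time subsequence of passed curvature tests; and both conclude with a union bound over estimator failure, timeout, and the two cap overflows, each at most $1/48$ (Appendix~\ref{app:caps}), where the timeout and overflow bounds are obtained on the accurate event by coupling the real run with the stopped process.
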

Section~\ref{sec:upper-proof} proves the descent and query estimates. Appendix~\ref{app:caps} verifies that the deterministic iteration and query caps preserve the stated success probability. Each shared-sample difference uses two charged evaluations; the algorithm never needs more than two evaluations of any newly drawn sample.

\subsection{Lower Bound}\label{sec:main-lower}
The lower bound covers the full class of adaptive randomized algorithms in Definition~\ref{def:oracle}, including arbitrary persistent reuse, unrestricted query locations, and outputs that have not been queried.
\begin{theorem}[Lower bound with persistent reuse]\label{thm:lower}
Fix positive bounds $\Delta,L_1,L_2,\sigma_g,\sigma_H,G$, and define
\begin{equation*}
    s_\star=\min\{1,\sigma_g,\sigma_H,L_1,L_2,G\},\qquad
    D_\eps=\left\lceil C\log\left(2+\frac{\Delta}{\eps}
    +\frac{\Delta s_\star^2}{\eps^3}\right)\right\rceil,
\end{equation*}
where $C$ is a sufficiently large numerical constant. For all sufficiently small $\eps,\gamma>0$, there is a dimension threshold $d_\star$ with
\begin{equation*}
    d_\star\le C_d\left(
    1+\frac{\Delta D_\eps^{3/4}}{\eps}
    +\gamma^{-4}\log(2/\gamma)+\gamma^{-3}\right)
\end{equation*}
such that, for $d\ge d_\star$, any algorithm satisfying~\eqref{eq:success} on every instance obeying Assumptions~\ref{ass:gap}--\ref{ass:population-gradient} must have worst-case total query budget at least
\begin{equation}\label{eq:total-lower}
    c\max\left\{
    \frac{\Delta s_\star^2}{\eps^3D_\eps^{1/4}},\ \gamma^{-3}
    \right\}.
\end{equation}
Here $c>0$ and $C_d>0$ may depend on the fixed problem parameters, but not on the tolerances. The same order holds for worst-case expected budgets. It remains necessary under the stronger oracle that returns the sampled value, complete gradient, and complete Hessian together in each unit-cost call.
\end{theorem}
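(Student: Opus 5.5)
The bound \eqref{eq:total-lower} is a maximum, so I would prove the two terms separately with two unrelated hard families and take the larger. Any algorithm that succeeds on every admissible instance must in particular succeed on each family, and each family will force the corresponding term. Before building either family I would normalize: scale $x$ and $F$ so that every construction meets Assumptions~\ref{ass:gap}--\ref{ass:population-gradient} with the fixed constants $\Delta,L_1,L_2,\sigma_g,\sigma_H,G$. The factor $s_\star$ is introduced at this step, because shrinking all sample-level amplitudes uniformly keeps every one of the six bounds valid. Finally, I would take $d_\star$ to be the larger of the two dimensions the families need.

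\emph{Curvature term.} I would use a deterministic instance, so $F(\cdot;\xi)=f$ and reusing samples gives no information. The construction is a smooth chain $\bar f(y)=\sum_{i\le K}\phi(y_{i-1},y_i)$ with $K\asymp\gamma^{-3}$ links, scaled so that $\lambda_{\min}<-\gamma$ holds at every point whose progress index is below $K$. Flat gates make each Hessian query reveal at most one new chain coordinate. I would embed it as $f(x)=\bar f(U^\top x)$ plus a radial barrier whose gradient stays bounded, where $U$ is a uniformly random orthogonal frame in $\bbR^d$ with $d\gtrsim\gamma^{-4}\log(2/\gamma)$. The barrier ensures that points far from the origin also have a direction of curvature below $-\gamma$, or else a large gradient.

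The argument then follows Carmon et al.: by concentration of measure on the sphere, a query at a point with small inner products against the unrevealed columns of $U$ is answered as if those columns were absent. Each query therefore advances the index by at most one, except on an event of probability $\cO(K e^{-cd/K})$, and outputs far from the chain fail. It follows that $o(\gamma^{-3})$ queries succeed with probability below $2/3$. The gap condition holds because each link contributes $\cO(\gamma^3)$ function drop.

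\emph{First-order term.} I would chain $M\asymp\Delta/(\eps D^{1/4})$ stages, each carrying an independent hidden sign vector on a block of $D=D_\eps$ coordinates. Each sample loss is defined through a fixed random coefficient table and shows the stage-$m$ signs only with a bias of order $\eps/s_\star$ relative to noise of order $s_\star$. The localization inequality of Lemma~\ref{lem:thirdstick} keeps each scalar sample loss $C^2$ with a uniformly Lipschitz Hessian, and the coordinate scale $D^{1/4}$ gives the per-block Hessian constant independent of $M$.

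For the information count I would strengthen the oracle to reveal, on the first query of a given handle and block, that whole block of the handle's coefficient table. Lemma~\ref{lem:persistent-info} then shows that every later query to that handle and block is a deterministic function of what was already revealed. Each first revelation contributes conditional relative entropy $\cO(\eps^2/s_\star^2)$ about the current stage's signs. By Pinsker or Fano, stage $m$ cannot be activated until about $s_\star^2/\eps^2$ such revelations have occurred. Since no $\eps$-stationary point exists before the last stage is activated, summing over stages gives $\Omega(\Delta s_\star^2/(\eps^3D^{1/4}))$ queries. The required dimension is $\asymp MD\asymp\Delta D^{3/4}/\eps$, plus the rotation slack.

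The joint value/gradient/Hessian oracle is covered by the same reveal-the-block device, since the full response is still a function of the revealed coefficients. Expected budgets follow by Markov's inequality applied to a truncated run.

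I expect the main obstacle to be the stagewise information argument under adaptive reuse. The difficulty is to make the stateful relative-entropy chain rule rigorous when the algorithm interleaves new handles, old handles and new blocks, and to keep the conservative sign chain from being activated through accumulated off-stage coordinates. I would first write the transcript likelihood as a product over first revelations only, and then bound each factor conditionally on the current stage index.
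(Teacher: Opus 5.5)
Your architecture is the paper's. For the $\gamma^{-3}$ term you use a deterministic rotated curvature chain with flat gates and a bounded-gradient radial wall. For the $\eps^{-3}$ term you use a conservative sign chain with coefficient-table sample losses, third-order stick-breaking localization, and first-revelation accounting. You then pad to a common dimension, take the maximum, and truncate for expected budgets.

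\textbf{The gap: the first-order information count.} A first use of a handle reveals the whole array $Z^{(r)}_i\in\{-R,R\}^D$, with $\Pr(Z_{i\ell}=R)=(1+\theta_{i\ell}/R)/2$ and $R\asymp s_\star/\eps$. Each of the $D$ coordinates contributes $\Theta(1/R^2)$, so the conditional relative entropy against a fair array is $\kappa\asymp D/R^2\asymp D\eps^2/s_\star^2$, not $\cO(\eps^2/s_\star^2)$. With the correct $\kappa$, Pinsker keeps the activation probability small only while $n\kappa\lesssim 1$, i.e.\ for $n\lesssim s_\star^2/(D\eps^2)$ revelations per stage. The total then becomes $\Delta s_\star^2/(\eps^3D_\eps^{5/4})$. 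That is still $\widetilde\Omega(\eps^{-3})$, but it misses the stated bound \eqref{eq:total-lower} by a factor $D_\eps$.

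\textbf{The missing idea: a large-deviation form of data processing.} Under the reference continuation, each first-revealed next-block array is replaced by fair signs, so the entire adaptive point sequence is independent of $\theta_i$. Hoeffding's inequality for $\sum_\ell\theta_{i\ell}\tanh(\alpha x_{i\ell})$ is uniform over arbitrarily large $x_i$, because $\tanh$ is bounded. It caps the reference activation probability by $q_D=C(N+1)e^{-c'D}$. The inequality $\mathrm{kl}(p\|q_D)\ge p\log(1/q_D)-\log 2$ then shows that raising this probability to a constant costs relative entropy of order $D$. This cancels the $D$ in $\kappa$ and gives the per-stage quota $n_0\asymp D/\kappa\asymp s_\star^2/\eps^2$. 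A Fano argument works only in this approximate-recovery form, not as exact recovery of $\theta_i$.

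The paper handles the far-activation issue you flagged with the same ingredients: this Hoeffding bound, a comparison transcript whose frontier rises by at most one per response, and $D\ge C\log((N+1)T)$. No rotation is needed in this family. Summing over stages also needs the stage-local quota and an averaging step, because an adaptive algorithm need not spread its calls evenly. If $N\le Tn_0/2$, then at least $T/2$ stages receive fewer than $n_0$ responses at their frontier.

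\textbf{Smaller points.} In the curvature family, a bounded-gradient wall must sit at radius $R\asymp 1/\gamma$ so that its tangential curvature $W_R'(r)/r\le 2b_0/R$ stays below $\gamma$. Chain coordinates are resolved at scale $h\asymp\gamma$. The per-column tail is therefore $\exp(-c(d-N)h^2/R^2)=\exp(-c'd\gamma^4)$ rather than $e^{-cd/K}$, which is exactly why $d\gtrsim\gamma^{-4}\log(2/\gamma)$ is needed. Finally, you attribute the wrong property to the scale $D^{1/4}$. Independence from the number of stages comes from Lemma~\ref{lem:thirdstick}. The scale $\alpha=D^{1/4}$ is what keeps the sample third derivative at $\cO(\eps R)=\cO(s_\star)$, independent of $D$.
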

The first-order construction in Section~\ref{sec:firstorder-lb} proves the first term of~\eqref{eq:total-lower}; the deterministic curvature construction in Section~\ref{sec:curvature-lb} proves the second. Section~\ref{sec:lower-proof} combines these separate instance families and proves Theorem~\ref{thm:lower}. Appendix~\ref{app:gradient-test} additionally proves that some admissible instances require $\Omega(\eps^{-2})$ gradient evaluations even if Hessian evaluations are free.

\begin{corollary}[Matching total complexity]\label{cor:total}
Under the oracle and assumptions of Section~\ref{sec:setup}, for fixed positive problem parameters and sufficiently small independent tolerances, the total complexity satisfies
\begin{equation*}
    Q_{\mathrm{tot}}^\star(\eps,\gamma)
    =\widetilde\Theta(\eps^{-3}+\gamma^{-3}),
\end{equation*}
with the growing-dimension convention stated there. The pair in~\eqref{eq:Ng}--\eqref{eq:NH} is achievable; the corollary does not characterize the full $(N_g,N_H)$ Pareto frontier.
\end{corollary}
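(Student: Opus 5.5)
The corollary follows by combining Theorem~\ref{thm:upper} with Theorem~\ref{thm:lower}; nothing new is constructed. For the upper direction, fix the problem parameters and any $0<\eps,\gamma\le1$. Theorem~\ref{thm:upper} gives an algorithm satisfying~\eqref{eq:success} in every finite dimension $d$ with deterministic caps $Q_g+Q_H=\cO(\mathcal L^4(B_g+B_H))$. Since $\mathcal L$ is logarithmic in $d$ and in $T=\cO(\eps^{-2}+\gamma^{-3})$, it is absorbed by the tilde. It then remains to bound $B_g+B_H$ by $\cO(\eps^{-3}+\gamma^{-3})$. The pure terms $\eps^{-2},\eps^{-3},\gamma^{-2},\gamma^{-3}$ are immediate, because the tolerances are at most one. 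The mixed terms are handled by weighted AM--GM:
\begin{equation*}
\eps^{-2}\gamma^{-1}\le\tfrac23\eps^{-3}+\tfrac13\gamma^{-3},\qquad
\eps^{-1}\gamma^{-2}\le\tfrac13\eps^{-3}+\tfrac23\gamma^{-3}.
\end{equation*}
Together these give $Q_d^\star(\eps,\gamma)\le\widetilde{\cO}(\eps^{-3}+\gamma^{-3})$ for every $d$. This also confirms that the pair~\eqref{eq:Ng}--\eqref{eq:NH} is achievable.

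For the lower direction, take $\eps,\gamma$ small enough for Theorem~\ref{thm:lower} to apply, and take $d\ge d_\star$. Here $d_\star$ is polynomial in $\eps^{-1},\gamma^{-1}$, which is the growing-dimension convention of Section~\ref{sec:setup}. Because $s_\star$ and $\Delta$ are fixed positive constants, the first term in~\eqref{eq:total-lower} is $\Omega(\eps^{-3}/D_\eps^{1/4})$ with $D_\eps=\cO(\log(1/\eps))$, so it is $\widetilde\Omega(\eps^{-3})$. The second term is $\Omega(\gamma^{-3})$. Finally, $\max\{a,b\}\ge\tfrac12(a+b)$, so every algorithm needs $Q_d^\star\ge\widetilde\Omega(\eps^{-3}+\gamma^{-3})$.

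The only delicate point is the bookkeeping of quantifiers. The upper bound holds for all $d$, with $\log d$ hidden in the tilde. The lower bound holds for $d\ge d_\star$. The tilde must hide $\log d$ only and never polynomial factors, so we evaluate at a dimension such as $d=d_\star$, which is polynomial in the inverse tolerances; there $\log d=\cO(\log(1/\eps)+\log(1/\gamma))$ and both bounds agree up to polylogarithms. I would state this explicitly. I would also remark that the Pareto frontier of $(N_g,N_H)$ is not addressed, since the lower bound concerns only the sum $N_g+N_H$.
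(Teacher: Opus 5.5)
Your proposal is correct and follows the paper's route. The upper bound comes from Theorem~\ref{thm:upper}, with the weighted AM--GM step absorbing $\eps^{-2}\gamma^{-1}$ and $\eps^{-1}\gamma^{-2}$ exactly as at the end of Section~\ref{sec:upper-proof}. The lower bound comes from Theorem~\ref{thm:lower}, using that $D_\eps$ is logarithmic and $\max\{a,b\}\ge(a+b)/2$. You also compare the two bounds at a dimension $d\ge d_\star$ polynomial in the inverse tolerances, so that $\log d$ stays polylogarithmic; the paper's one-line proof leaves this bookkeeping implicit, and stating it is a welcome clarification of its growing-dimension convention.
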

\begin{proof}
The upper bound follows from Theorem~\ref{thm:upper}. In Theorem~\ref{thm:lower}, $D_\eps$ is logarithmic in the inverse tolerance for fixed positive parameters, and the maximum of two nonnegative terms is at least half their sum. These facts yield the matching lower bound up to logarithmic factors.
\end{proof}
\section{Upper Bound: Derivative Tracking and Descent}\label{sec:upper}
This section proves Theorem~\ref{thm:upper} by combining derivative estimation with descent. Section~\ref{sec:tracking} specifies the $\textsc{TreeEstimate}$ subroutine used in Algorithm~\ref{alg:curvature-first}, proves simultaneous accuracy of its gradient and Hessian estimates, and bounds their sampling cost through predictable drift and martingale motion. Section~\ref{sec:upper-proof} applies these estimates to the optimization trajectory, derives the separate gradient and Hessian budgets, and establishes the total rate $\widetilde{\cO}(\eps^{-3}+\gamma^{-3})$ with success probability at least $2/3$.

\subsection{Binary-Ancestor Derivative Estimation}\label{sec:tracking}
Consider a vector- or self-adjoint-matrix-valued random field $A(x;\xi)$, with mean $A(x)$, satisfying
\begin{equation*}
\norm{A(x;\xi)-A(x)}\le\sigma,\qquad
 \norm{A(x;\xi)-A(y;\xi)}\le L\norm{x-y}.
\end{equation*}
The appropriate norm is understood. The same Lipschitz bound holds for $A$.
Let $y_0,y_1,\ldots$ be adaptively selected locations, with at most $T+1$ locations. Put
\begin{equation*}
m=\lceil\log_2(T+1)\rceil+1,\qquad
 p(k)=k-2^{v_2(k)}\quad(k\ge1),
\end{equation*}
where $v_2(k)$ is the exponent of two in $k$. Repeatedly applying $p$ removes the least significant nonzero bit, so every root-to-node path has at most $m$ edges.

Estimate $A(y_0)$ with a fresh batch. For $k\ge1$, after both endpoints have been determined, use new independent sample handles to form
\begin{equation*}
\widehat D_k=\frac1{b_k}\sum_{i=1}^{b_k}
 [A(y_k;\xi_{ki})-A(y_{p(k)};\xi_{ki})],
 \qquad \widehat A_k=\widehat A_{p(k)}+\widehat D_k.
\end{equation*}
Use independent batches for different edges. An estimate contains only logarithmically many cached edge errors. Algorithm~\ref{alg:tree-estimate} implements one update; its parameters $(\sigma,L,\eta,\delta,T)$ are fixed for each estimator instance. The state consists of the next checkpoint index and all previously cached location--estimate pairs. In Algorithm~\ref{alg:curvature-first}, the two instances use $\delta=1/96$, with error targets $\eps/4$ and $\gamma/4$, respectively. Every evaluation is charged to the corresponding derivative budget, and the caller aborts before any evaluation that would exceed its cap.

\begin{algorithm}[tbp]
\caption{$\textsc{TreeEstimate}(A,x,\eta)$: one binary-ancestor update}\label{alg:tree-estimate}
\begin{algorithmic}[1]
\Require Derivative field $A$; point $x$; error target $\eta$; fixed bounds $\sigma,L$, failure probability $\delta$, and horizon $T$.
\Statex \textbf{Persistent state:} next index $k$, initially $0$; cached pairs $(y_j,\widehat A_j)$ for $j<k$.
\State Set $y_k\gets x$, $m\gets\lceil\log_2(T+1)\rceil+1$, and $\ell\gets\log(C(d+1)(T+1)/\delta)$.
\State Set $\beta\gets C\ell(m+1)^2/\eta^2$, with $C$ as in Lemma~\ref{lem:accuracy}.
\If{$k=0$}
    \State Set $b\gets\max\{1,\lceil\beta\sigma^2\rceil\}$.
\Else
    \State Set $p\gets k-2^{v_2(k)}$.
    \State Set $b\gets\max\{1,\lceil\beta\min\{\sigma^2,L^2\norm{y_k-y_p}^2\}\rceil\}$.
\EndIf
\State Draw $b$ fresh independent sample handles, with samples $\xi_1,\ldots,\xi_b$.
\If{$k=0$}
    \State Compute $\widehat A_0\gets b^{-1}\sum_{i=1}^b A(y_0;\xi_i)$ using $b$ evaluations.
\Else
    \State Compute $\widehat D_k\gets b^{-1}\sum_{i=1}^b[A(y_k;\xi_i)-A(y_p;\xi_i)]$ using $2b$ evaluations.
    \State Set $\widehat A_k\gets\widehat A_p+\widehat D_k$.
\EndIf
\State Cache $(y_k,\widehat A_k)$ and set $k\gets k+1$.
\State \Return $\widehat A_{k-1}$.
\end{algorithmic}
\end{algorithm}

\begin{lemma}[Uniform accuracy]\label{lem:accuracy}
For a target error $\eta>0$ and failure probability $\delta$, it suffices to use
\begin{align*}
 b_0&=\max\left\{1,\left\lceil C\ell(m+1)^2\sigma^2/\eta^2\right\rceil\right\},\\
 b_k&=\max\left\{1,\left\lceil\frac{C\ell(m+1)^2}{\eta^2}
 \min\{\sigma^2,L^2\norm{y_k-y_{p(k)}}^2\}\right\rceil\right\},
\end{align*}
where $\ell=\log(C(d+1)(T+1)/\delta)$ and $C$ is a numerical constant. Then all constructed estimates satisfy $\norm{\widehat A_k-A(y_k)}\le\eta$ simultaneously with probability at least $1-\delta$.
\end{lemma}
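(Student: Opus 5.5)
The proof goes through the binary-ancestor path decomposition, a matrix/vector Bernstein bound for each edge, and a union bound.

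\emph{Path decomposition.} For each $k$, let $k=k_0,k_1=p(k_0),\ldots,k_J=0$ be the ancestor chain, with $J\le m$. Unrolling $\widehat A_k=\widehat A_{p(k)}+\widehat D_k$ gives
\begin{equation*}
\widehat A_k-A(y_k)=\bigl(\widehat A_0-A(y_0)\bigr)+\sum_{j=0}^{J-1}\Bigl(\widehat D_{k_j}-\bigl[A(y_{k_j})-A(y_{k_{j+1}})\bigr]\Bigr),
\end{equation*}
so the error is a sum of at most $m+1$ edge errors. It therefore suffices to show that every edge error has norm at most $\eta/(m+1)$, simultaneously over all edges, with probability at least $1-\delta$.

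\emph{Single edge bound.} Fix $k\ge1$ and condition on the $\sigma$-field generated by everything before batch $k$ is drawn. The locations $y_k,y_{p(k)}$ are already determined, and the handles in batch $k$ are fresh, so the summands
\begin{equation*}
Z_i=A(y_k;\xi_{ki})-A(y_{p(k)};\xi_{ki})-\bigl[A(y_k)-A(y_{p(k)})\bigr]
\end{equation*}
are conditionally i.i.d.\ and mean zero. Each satisfies $\norm{Z_i}\le 2\min\{\sigma,L\norm{y_k-y_{p(k)}}\}=:2r_k$. The bound $2\sigma$ comes from the noise bound at the two endpoints. The bound $2Lr$ comes from sample-wise Lipschitzness together with the Lipschitzness of the mean.

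Apply the vector Bernstein inequality, or for Hessians the matrix Bernstein inequality with dimension factor $2d$. Vectors can be embedded as $(d+1)\times(d+1)$ self-adjoint dilations, which is where $d+1$ enters. This gives
\begin{equation*}
\bbP\bigl(\norm{\textstyle\frac1{b_k}\sum_i Z_i}>t\bigr)\le 2(d+1)\exp\Bigl(-c\,\frac{b_kt^2}{r_k^2+r_kt}\Bigr).
\end{equation*}
Take $t=\eta/(m+1)$ and $b_k\ge C\ell(m+1)^2r_k^2/\eta^2$. The exponent is then at least of order $\ell$ whenever $r_k\lesssim t$. The case $r_k\gg t$ needs care: the linear term gives $b_kt/r_k\ge C\ell\, r_k/t\ge C\ell$, so it is also fine. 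If $r_k=0$, the edge error is identically zero. The failure probability per edge is therefore at most $\delta/(T+1)$ once $C$ is large. The root batch is handled the same way, with $r_0=\sigma$.

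\emph{Union bound and the main obstacle.} The conditional probabilities integrate to unconditional ones, so a union bound over at most $T+1$ batches gives total failure probability at most $\delta$. The main subtlety is adaptivity: later locations depend on earlier estimates, so the edge errors are not independent. This causes no difficulty. Each bound is conditional on the past and uses only the freshness of the handles in that batch, and the union bound needs no independence across edges.

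On the good event, every edge error is at most $\eta/(m+1)$. By the triangle inequality on the path decomposition, $\norm{\widehat A_k-A(y_k)}\le(m+1)\cdot\eta/(m+1)=\eta$ for all $k$ simultaneously.
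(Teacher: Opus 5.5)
Your architecture is exactly the paper's. You condition on the history once both endpoints are fixed and bound each fresh centered summand by $2r_k$, with $r_k=\min\{\sigma,L\norm{y_k-y_{p(k)}}\}$. You then apply a matrix inequality (with self-adjoint dilation for vectors) at level $\eta/(m+1)$ and failure $\delta/(T+1)$, union bound over at most $T+1$ batches using only conditional bounds, and telescope along at most $m$ ancestor edges. The step that fails as written is the tail estimate, because you chose Bernstein rather than Hoeffding.

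With $t=\eta/(m+1)$, your exponent is $c\,b_kt^2/(r_k^2+r_kt)$. When $r_k<t$ the denominator is dominated by $r_kt$, so the exponent is of order $b_kt/r_k$. From $b_k\ge\max\{1,C\ell r_k^2/t^2\}$ you only get $b_kt/r_k\ge\max\{t/r_k,\;C\ell r_k/t\}$. At $r_k=t/\sqrt{C\ell}$ one has $b_k=1$, and this equals $\sqrt{C\ell}$, not order $\ell$. The resulting bound $2(d+1)e^{-c\sqrt{C\ell}}$ is not below $\delta/(T+1)$ for a fixed numerical $C$ once $\ell$ is large. So your claim that the regime $r_k\lesssim t$ is fine is false. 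The regime you flagged as delicate, $r_k\gg t$, is the easy one: there $r_k^2+r_kt\le2r_k^2$, so the exponent is at least $cC\ell/2$ directly.

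The repair is one line. Each summand has norm at most $2r_k$, so the edge error is deterministically at most $2r_k$, and edges with $2r_k\le t$ never fail. If $2r_k>t$, then $r_k^2+r_kt<3r_k^2$ and the Bernstein exponent is at least $cC\ell/3$. The root batch ($r_0=\sigma$) needs the same split when $\sigma<t$.

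Alternatively, use matrix Hoeffding, as the paper does. Its sub-Gaussian tail $2(d+1)\exp(-c\,b_kt^2/r_k^2)$ matches the batch size $b_k\propto r_k^2/t^2$ in every regime with no case split. Bernstein buys nothing here, since the only variance proxy available is $4r_k^2$, the square of the almost-sure bound. With either fix, the rest of your argument goes through unchanged.
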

The proof conditions on the two endpoints before drawing the batch, applies matrix Hoeffding with self-adjoint dilation for vectors~\cite{tropp2012user}, and sums at most $m$ edge errors. Appendix~\ref{app:technical} contains the full argument.

\subsubsection{The drift--martingale charging lemma}
Suppose a trajectory has increments
\begin{equation*}
x_{t+1}-x_t=a_t+z_t,\qquad
 \bbE[z_t\mid\mathcal F_t]=0,
\end{equation*}
where $a_t$ is $\mathcal F_t$-measurable and $z_t$ has finite second moment. Define
\begin{equation*}
D=\sum_{t<T}\norm{a_t},\qquad V=\sum_{t<T}\norm{z_t}^2.
\end{equation*}
The locations $y_k=x_{\tau_k}$ may be sampled at increasing bounded stopping times. This permits a gradient estimator to be updated only when a curvature test passes.

\begin{lemma}[Dyadic charging]\label{lem:charge}
The binary-ancestor estimator has expected evaluation cost bounded by
\begin{equation*}
C\,\bbE M+
 \frac{C\ell(m+1)^3}{\eta^2}
 \left(\sigma^2+\sigma L\bbE D+L^2\bbE V\right),
\end{equation*}
where $M$ is its number of actual locations, and a same-sample difference is charged twice. This bound also holds for a trajectory stopped at any bounded stopping time.
\end{lemma}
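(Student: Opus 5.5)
The cost of location $k$ is at most $2b_k\le 2+2\beta\min\{\sigma^2,L^2\norm{y_k-y_{p(k)}}^2\}$, with $\beta=C\ell(m+1)^2/\eta^2$, plus the root cost $b_0\le1+\beta\sigma^2$. Summing over the $M$ actual locations gives
\begin{equation*}
\text{cost}\le C\,M+\beta\Bigl(\sigma^2+2\sum_{k\ge1}\min\{\sigma^2,L^2\norm{y_k-y_{p(k)}}^2\}\Bigr).
\end{equation*}
So the whole task is to bound the expected sum of truncated squared edge lengths by $(m+1)\bigl(\sigma L\,\bbE D+L^2\bbE V\bigr)$, up to a constant.

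I would organize the edges by level. The edge $(p(k),k)$ has level $j=v_2(k)$ and spans the time window $(\tau_{p(k)},\tau_k]$. For each fixed level $j\le m$, these windows are pairwise disjoint: they are consecutive blocks of $2^j$ checkpoints, and only every other block is used. On each window, split the displacement into its drift and martingale parts. Writing $y_k-y_{p(k)}=A_k+Z_k$ with $A_k=\sum a_t$ and $Z_k=\sum z_t$ over the window, use
\begin{equation*}
\min\{\sigma^2,L^2\norm{A+Z}^2\}\le 2\min\{\sigma^2,L^2\norm{A}^2\}+2L^2\norm{Z}^2 .
\end{equation*}
For the drift part, apply $\min\{\sigma^2,L^2u^2\}\le\sigma L u$. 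This gives at most $\sigma L\norm{A_k}\le\sigma L\sum_{\text{window}}\norm{a_t}$, so by disjointness each level contributes at most $\sigma L D$. This is the point of the truncation: drift is charged linearly.

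For the martingale part, the window endpoints are stopping times. Optional stopping for the square-integrable martingale $\sum_{s<t}z_s$ gives
\begin{equation*}
\bbE\norm{Z_k}^2=\bbE\sum_{\text{window}}\norm{z_t}^2 ,
\end{equation*}
because cross terms vanish by orthogonality of martingale increments. Here I use that the stopping times are bounded, since at most $T$ steps occur. Summing over the disjoint windows at one level gives $\bbE V$. Adding the $m+1$ levels then yields $(m+1)(2\sigma L\bbE D+2L^2\bbE V)$. Combined with $\beta$, this produces the factor $\ell(m+1)^3/\eta^2$.

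The main obstacle is making the martingale step rigorous when checkpoints that were never reached still carry a window. If the trajectory is stopped at a bounded stopping time $\tau$, or fewer than $T+1$ locations are actually produced, I would stop all processes at $\tau$. Undefined checkpoints contribute zero cost, and for the window of an actual edge I apply optional stopping to the stopped martingale $M_{t\wedge\tau}$. I also need each window's start $\tau_{p(k)}$ to be a stopping time, which holds because $\tau_k$ is increasing and the checkpoint index is determined by the count of previous passes. With these points checked, the same bound holds for the stopped trajectory, with $M$ and $D$, $V$ computed up to $\tau$.
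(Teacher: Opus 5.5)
Your proposal is correct and matches the paper's proof essentially step for step. Both use the per-edge bound $\min\{c,2u+2v\}\le 2\min\{c,u\}+2v$ together with $\min\{\sigma^2,L^2r^2\}\le\sigma Lr$, the stopped martingale isometry between bounded stopping times, disjointness of the windows within each dyadic level, and a sum over at most logarithmically many levels. One step should be stated explicitly when the number of checkpoints is random: apply the isometry to the windows $(\tau_{p(k)}\wedge\tau,\ \tau_k\wedge\tau]$ for every index up to the deterministic cap, not only to edges already known to be actual. Then bound the actual-edge charges pathwise by these nonnegative extended charges, which is exactly the paper's ``extended tree'' step.
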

\begin{proof}
For one edge between bounded stopping times $s\le t$, decompose
\begin{equation*}
x_t-x_s=A_{s,t}+Z_{s,t},\quad
 A_{s,t}=\sum_{u=s}^{t-1}a_u,\quad
 Z_{s,t}=\sum_{u=s}^{t-1}z_u.
\end{equation*}
For nonnegative $u,v,c$,
$\min\{c,2u+2v\}\le2\min\{c,u\}+2v$.
Together with $\min\{\sigma^2,L^2r^2\}\le\sigma Lr$, this gives
\begin{align}
 \bbE\min\{\sigma^2,L^2\norm{x_t-x_s}^2\}
 &\le2\sigma L\bbE\sum_{u=s}^{t-1}\norm{a_u}
   +2L^2\bbE\norm{Z_{s,t}}^2\\
 &=2\sigma L\bbE\sum_{u=s}^{t-1}\norm{a_u}
   +2L^2\bbE\sum_{u=s}^{t-1}\norm{z_u}^2.\label{eq:chargeedge}
\end{align}
The last equality is the stopped martingale isometry. It is valid because the endpoints are bounded stopping times, not arbitrary retrospectively selected times.

For any fixed $j$, edges with $2^{v_2(k)}=2^j$ occupy disjoint intervals of the checkpoint index. Their corresponding chronological intervals are disjoint as well. Summing \eqref{eq:chargeedge} over that level charges $D$ and $V$ at most once. There are at most $m$ levels. Summing the batch sizes of Lemma~\ref{lem:accuracy}, including two charges per edge, proves the result. If the number of actual checkpoints is random, extend their stopping times by the terminal stopping time up to the deterministic index cap. The nonnegative increment charges for actual edges are bounded by those of the extended tree. Ceilings are charged only at actual checkpoints, yielding the term $C\bbE M$.
\end{proof}

\paragraph{What the estimate avoids.}
Replacing every squared displacement by the squared sum of step lengths destroys the benefit. For randomly signed steps, it is $\bbE V$, not $\bbE(\sum\norm{z_t})^2$, that enters the cost. The global noise cap is also important: it converts potentially long predictable displacements to a linear drift charge.

\subsection{Proof of the Upper Bound}\label{sec:upper-proof}
We analyze Algorithm~\ref{alg:curvature-first} with the binary-ancestor estimators of Section~\ref{sec:tracking}. The Hessian error target is $\gamma/4$ at every iterate, and the gradient error target is $\eps/4$ on the subsequence that passes the Hessian test. A new batch of independent sample handles is drawn only after the two endpoints of its estimator edge have been determined.

\subsubsection{Progress, predictable drift, and martingale energy}
Suppose estimates used for a step are accurate. At a gradient step,
\begin{equation*}
\left\langle\nabla f(x_t),\frac{g_t}{\norm{g_t}}\right\rangle
 \ge\norm{g_t}-\eps/4>\eps/2.
\end{equation*}
Population $L_1$-smoothness therefore gives
\begin{equation}
f(x_{t+1})-f(x_t)
 \le-\frac{\eps^2}{8L_1}+\frac{\eps^2}{32L_1}
 =-d_g,\qquad d_g:=\frac{3\eps^2}{32L_1}.\label{eq:dg}
\end{equation}
At a curvature step, $v_t^\top\nabla^2f(x_t)v_t<-\gamma/2$. The two signs cancel the linear term in the Hessian-Lipschitz Taylor inequality, giving
\begin{align*}
 \bbE[f(x_{t+1})-f(x_t)\mid\mathcal F_t]
 &\le-\frac\gamma4\left(\frac\gamma{L_2}\right)^2
       +\frac{L_2}{6}\left(\frac\gamma{L_2}\right)^3\\
 &=-d_H,\qquad d_H:=\frac{\gamma^3}{12L_2^2}.
\end{align*}
Here $\mathcal F_t$ includes the chosen eigenvector and all oracle data, but not the new sign.

For a rigorous failure-event analysis, stop an analytical shadow trajectory before executing the first step with an inaccurate estimator, and freeze it thereafter. This stopping rule is for the proof, not an implementable oracle. Also stop upon successful return or at the deterministic iteration cap. Denote its executed gradient and curvature counts by $K_g,K_H$. Telescoping conditional expected descent and using $f\ge\inf f$ yields
\begin{equation*}
d_g\bbE K_g+d_H\bbE K_H\le\Delta.
\end{equation*}
Write gradient steps as the predictable increments $a_t$ and signed curvature steps as $z_t$. Then
\begin{align}
 \bbE D&\le\frac{\eps}{4L_1}\frac{\Delta}{d_g}
       =\frac{8\Delta}{3\eps},\label{eq:D}\\
 \bbE V&\le\frac{\gamma^2}{L_2^2}\frac{\Delta}{d_H}
       =\frac{12\Delta}{\gamma}.\label{eq:V}
\end{align}
These identities remain valid after the shadow stopping rule because the decision to execute a signed step is made before its independent sign is drawn.

\subsubsection{Separate query counts}
The shadow has at most $K_g+1$ actual gradient checkpoints and at most $K_g+K_H+1$ Hessian checkpoints. A final inaccurate test or successful stopping test is included in the extra one. For the gradient tree apply Lemma~\ref{lem:charge} with $(\sigma,L,\eta)=(\sigma_g,L_1,\eps/4)$ at its bounded stopping-time subsequence. For the Hessian tree use $(\sigma_H,L_2,\gamma/4)$ at every state. The initial estimation costs are proportional to $\sigma_g^2/\eps^2$ and $\sigma_H^2/\gamma^2$. The drift bound~\eqref{eq:D} contributes $\sigma_gL_1\Delta/\eps^3$ and $\sigma_HL_2\Delta/(\eps\gamma^2)$, while the martingale bound~\eqref{eq:V} contributes $L_1^2\Delta/(\eps^2\gamma)$ and $L_2^2\Delta/\gamma^3$. Adding the checkpoint counts yields the budgets $B_g$ and $B_H$ defined in~\eqref{eq:B_g}--\eqref{eq:B_H}, up to logarithmic factors. The two curvature terms combined in $B_H$ account for the checkpoint ceiling and martingale contribution, respectively.

\subsubsection{Fixed budgets and the total rate}
The estimates above concern the shadow process, which stops before its first inaccurate step. Appendix~\ref{app:caps} converts these expectations into an implementable algorithm with deterministic iteration and derivative caps. A conditional union bound controls estimation failure; Markov bounds control timeout and the two resource overflows. The resulting success probability exceeds $2/3$.
At a successful return,
\begin{equation*}
\norm{\nabla f(x_t)}\le\norm{g_t}+\eps/4\le\eps,
 \qquad
 \lammin(\nabla^2f(x_t))\ge\lammin(H_t)-\gamma/4\ge-\gamma.
\end{equation*}
For fixed positive parameters and small tolerances, \eqref{eq:B_g}--\eqref{eq:B_H} simplify to \eqref{eq:Ng}--\eqref{eq:NH}. Finally, weighted arithmetic--geometric mean gives
\begin{equation*}
\eps^{-2}\gamma^{-1}\le\tfrac23\eps^{-3}+\tfrac13\gamma^{-3},\qquad
 \eps^{-1}\gamma^{-2}\le\tfrac13\eps^{-3}+\tfrac23\gamma^{-3}.
\end{equation*}
This completes the proof of Theorem~\ref{thm:upper}.

\section{Lower Bound: Hard Instances under Sample Reuse}\label{sec:lower}
We prove Theorem~\ref{thm:lower} through two separate families of admissible instances. Section~\ref{sec:curvature-lb} constructs deterministic losses requiring $\Omega(\gamma^{-3})$ queries to return an $(\eps,\gamma)$-SOSP, even with unrestricted adaptive queries and complete-Hessian responses; the tolerance dependence arises from negative curvature along a hidden chain. Section~\ref{sec:firstorder-lb} constructs stochastic losses requiring $\Omega(\Delta s_\star^2/(\eps^3D_\eps^{1/4}))$ total queries to satisfy the gradient condition, even with arbitrary persistent sample reuse. This yields $\widetilde\Omega(\eps^{-3})$ for fixed positive scale parameters. Both constructions permit joint value, gradient, and complete-Hessian responses at unit cost. Section~\ref{sec:lower-proof} embeds the two families into a common dimension and combines their bounds to complete the proof, including worst-case expected budgets. The first-order bound concerns the tolerance $\eps$ and does not restrict the algorithm to gradient-only access.

\subsection{A Curvature Lower Bound for Unrestricted Queries}\label{sec:curvature-lb}
A lower bound proved only for coordinate-restricted queries would not suffice. Also, simply making a chain compactly supported creates easy stationary outputs far away. The construction below deals with both issues and respects the global population-gradient bound.

\subsubsection{A bounded smooth curvature chain}
Define
\begin{equation*}
s(u)=\begin{cases}0,&u\le\tfrac12,\\
             \exp(-1/(u-\tfrac12)),&u>\tfrac12,
             \end{cases}
 \qquad a(u)=s(u)+s(-u),\qquad
 b(u)=4(e^{-u^2/4}-1).
\end{equation*}
These are $C^\infty$ with bounded derivatives of every fixed order. We have $0\le a<1$, $-4\le b\le0$, and $a$, with all its derivatives, vanishes on $[-1/2,1/2]$. Furthermore,
\begin{equation*}
a(1)=e^{-2},\quad a''(u)\ge0\ (|u|\le1),\quad
 b''(u)=(u^2-2)e^{-u^2/4}\le-e^{-1/4}\ (|u|\le1).
\end{equation*}
For example, if $t=u-1/2\in(0,1/2]$, then
$s''(u)=e^{-1/t}(1-2t)/t^4\ge0$, proving the gate-convexity assertion.

For chain length $n$, let
\begin{equation*}
q_n(y)=a(1)b(y_1)+\sum_{i=2}^n a(y_{i-1})b(y_i).
\end{equation*}
There are numerical constants $C_1,C_2,C_3$ independent of $n$ such that
\begin{align}
 -4n\le q_n\le0,
 &\qquad q_n(0)=0,\label{eq:qgap}\\
 \norm{\nabla q_n}\le C_1\sqrt n,
 &\qquad \norm{\nabla^2q_n}\le C_2,\qquad
 \operatorname{Lip}(\nabla^2q_n)\le C_3.\label{eq:qreg}
\end{align}
Indeed, a gradient coordinate contains at most two bounded terms. The Hessian is tridiagonal with uniformly bounded row sums. Every third derivative is a sum of bounded coefficients times products of three vector coordinates at neighboring indices; Holder's inequality, followed by $\norm{v}_3\le\norm{v}$, bounds its trilinear operator norm independently of $n$.

\begin{lemma}[Curvature and robust sequential revelation]\label{lem:chain}
Let $c_0=e^{-9/4}$.
If any coordinate of $y$ has magnitude at most one, then
\begin{equation*}
\lammin(\nabla^2q_n(y))\le-c_0.
\end{equation*}
If $|y_i|\le1/2$ for every $i\ge k$, the value, gradient, and complete Hessian of $q_n$ at $y$ depend only on its first $k$ coordinates. Equivalently, its order-two jet agrees there with the truncated chain $q_k$.
\end{lemma}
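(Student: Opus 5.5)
The first claim follows from a single Rayleigh quotient with a coordinate vector, provided the coordinate is chosen carefully. The second claim follows from the flatness of the gate $a$ on $[-1/2,1/2]$.

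\emph{Curvature.} Suppose some coordinate satisfies $|y_i|\le1$, and let $j$ be the \emph{smallest} such index. We bound $e_j^\top\nabla^2q_n(y)e_j=\partial_{jj}q_n(y)$. The variable $y_j$ appears only in the link $a(y_{j-1})b(y_j)$, where for $j=1$ we read $a(y_0)$ as the constant $a(1)$, and, when $j<n$, in the link $a(y_j)b(y_{j+1})$. Hence
\begin{equation*}
\partial_{jj}q_n(y)=a(y_{j-1})\,b''(y_j)+a''(y_j)\,b(y_{j+1}),
\end{equation*}
where the second term is absent if $j=n$.

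We first show that the gate coefficient is at least $e^{-2}$. For $j=1$ it equals $a(1)=e^{-2}$. For $j\ge2$, minimality of $j$ gives $|y_{j-1}|>1$. For $|u|>1$ only one of $s(u)$, $s(-u)$ is nonzero, so $a(u)=s(|u|)$. Since $s$ is increasing on $(1/2,\infty)$, this gives $a(y_{j-1})\ge s(1)=e^{-2}$.

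Next, since $|y_j|\le1$, we have $b''(y_j)\le-e^{-1/4}$, so the first term is at most $-e^{-2}e^{-1/4}=-c_0$. For the second term, $a''(y_j)\ge0$ because $|y_j|\le1$, and $b\le0$ everywhere, so the term is nonpositive. Therefore $\lammin(\nabla^2q_n(y))\le e_j^\top\nabla^2q_n(y)e_j\le-c_0$.

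The only delicate point is this choice of $j$. An arbitrary index with $|y_j|\le1$ could have $a(y_{j-1})=0$, namely when $|y_{j-1}|\le1/2$. Taking the first such index forces the preceding gate to be open, with value at least $e^{-2}$.

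\emph{Sequential revelation.} Assume $|y_i|\le1/2$ for all $i\ge k$. Each link $a(y_{i-1})b(y_i)$ with $i\ge k+1$ has $i-1\ge k$. Its value, first derivatives and second derivatives at $y$ are products in which every term contains a factor $a(y_{i-1})$, $a'(y_{i-1})$ or $a''(y_{i-1})$. All three vanish because $a$ and its derivatives vanish on $[-1/2,1/2]$. Thus these links contribute nothing to the order-two jet at $y$, and that jet coincides with the jet of $q_k(y)=a(1)b(y_1)+\sum_{i=2}^k a(y_{i-1})b(y_i)$. The function $q_k$ depends only on $y_1,\dots,y_k$, and the jet is evaluated at the point $y$ alone, so no neighborhood condition is needed.
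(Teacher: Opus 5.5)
Your proof is correct and follows the paper's argument. You take the first index $j$ with $|y_j|\le1$, so the incoming gate is at least $a(1)=e^{-2}$; you bound $\partial_{jj}q_n(y)$ using $b''\le-e^{-1/4}$, $a''\ge0$ and $b\le0$; and you get the jet claim from the flatness of $a$ on $[-1/2,1/2]$, including its endpoints, while filling in details (the monotonicity of $s$ and the explicit product-rule factors) that the paper leaves implicit.
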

\begin{proof}
Let $j$ be the first index with $|y_j|\le1$. If $j=1$, interpret its predecessor coefficient as $a(1)$; otherwise $|y_{j-1}|>1$, so that $a(y_{j-1})\ge a(1)$. The diagonal Hessian entry is
\begin{equation*}
\partial_{jj}q_n(y)=a(y_{j-1})b''(y_j)+a''(y_j)b(y_{j+1})\le-c_0,
\end{equation*}
where the second term is omitted at $j=n$. This proves the curvature claim. For the jet claim, each omitted term $a(y_{i-1})b(y_i)$, $i>k$, has a gate whose value and first two derivatives vanish. Thus the term and its complete order-two jet vanish. This remains true at the gate endpoints because the gate is flat there.
\end{proof}

\subsubsection{Scaling and bounded-gradient radial globalization}
Let $U\in\bbR^{d\times n}$ have orthonormal columns. Choose fixed constants
\begin{equation*}
\kappa=8/c_0,\qquad K\ge4\kappa C_3/L_2,
 \qquad h=K\gamma,\qquad \lambda=\kappa\gamma h^2.
\end{equation*}
For sufficiently small $\gamma$, set
\begin{equation*}
n=\left\lfloor\frac{\Delta}{8\lambda}\right\rfloor,
 \qquad \phi_U(x)=\lambda q_n(U^\top x/h).
\end{equation*}
Then $n=\Theta(\gamma^{-3})$ and
\begin{equation*}
-\Delta/2\le\phi_U\le0,\quad
 \norm{\nabla\phi_U}\le B_1\sqrt\gamma,\quad
 \norm{\nabla^2\phi_U}\le B_2\gamma,\quad
 \operatorname{Lip}(\nabla^2\phi_U)\le L_2/4,
\end{equation*}
for constants $B_1,B_2$ depending only on the fixed model parameters. At a point with an incomplete chain coordinate, the direction of that coordinate has curvature at most $-8\gamma$.

Let $b_0=G/4$. Choose fixed smooth functions $S,\chi:[0,\infty)\to[0,1]$ such that $S$ is nondecreasing, equals zero on $[0,1/2]$, and equals one on $[1,\infty)$; and $\chi$ is nonincreasing, equals one on $[0,1]$, and equals zero on $[2,\infty)$. Put
\begin{equation*}
R=K_R/\gamma,\qquad K_R\ge\max\{1,2b_0\},\qquad
 W_R(r)=b_0R\int_0^{r/R}S(u)\,\rmd u,
\end{equation*}
and define
\begin{equation}
f_U(x)=\chi(\norm{x}/R)\phi_U(x)+W_R(\norm{x}).\label{eq:global}
\end{equation}
The radial functions are constant near the origin, so they are smooth as functions of $x$ there. All cutoff transitions are smooth and flat at their plateau endpoints.

\begin{lemma}[Admissibility and exclusion of exterior outputs]\label{lem:global}
For all sufficiently small $\gamma$, the deterministic loss $F(x;\xi)=f_U(x)$ satisfies every assumption of the model, with $x_0=0$, uniformly over $U$. For all sufficiently small $\eps$, an $(\eps,\gamma)$-SOSP must lie in $\norm{x}<R$. Inside that ball, an $(\eps,\gamma)$-SOSP must have $|(U^\top x/h)_i|>1$ for every $i$.
\end{lemma}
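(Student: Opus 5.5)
The proof splits into three parts: admissibility of the deterministic instance, exclusion of outputs with $\norm{x}\ge R$, and the interior chain condition. Throughout we use the scaled bounds on $\phi_U$ displayed before the lemma and the support properties of $S$ and $\chi$.

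\emph{Admissibility.} With a deterministic loss, $\sigma_g$ and $\sigma_H$ noise is zero, and unbiasedness is trivial. For the gap, take $x_0=0$, so $f_U(0)=\phi_U(0)=0$. Since $\chi\phi_U\ge-\Delta/2$ and $W_R\ge0$, we get $f_U\ge-\Delta/2$, hence a gap of at most $\Delta/2$.

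For the gradient bound, write
\begin{equation*}
\nabla f_U=\chi\nabla\phi_U+\phi_U\chi'(\norm{x}/R)\frac{x}{R\norm{x}}+b_0S(\norm{x}/R)\frac{x}{\norm{x}}.
\end{equation*}
The first term is $\cO(\sqrt\gamma)$. The second is at most $(\Delta/2)\norm{\chi'}_\infty\gamma/K_R=\cO(\gamma)$. The third is at most $b_0=G/4$. So the gradient is at most $G$ for small $\gamma$.

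For the Hessian and its Lipschitz constant, differentiate the product and the radial terms. Each derivative of a radial profile produces a factor $1/R=\gamma/K_R$, or a factor $1/\norm{x}$; the latter is only relevant for $\norm{x}\ge R/2$, because the profiles are constant near the origin. Each factor is paired with the bounds $\norm{\phi_U}\le\Delta/2$, $\norm{\nabla\phi_U}=\cO(\sqrt\gamma)$, $\norm{\nabla^2\phi_U}=\cO(\gamma)$ and $\operatorname{Lip}(\nabla^2\phi_U)\le L_2/4$. For example, $\phi_U\,D^3(\chi)$ is $\cO(\gamma^3)$, and $b_0 D^2(\text{radial})$ gives $\cO(\gamma^2)$ in the Hessian-Lipschitz bound. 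All mixed terms therefore vanish as $\gamma\to0$, except $\chi\operatorname{Lip}(\nabla^2\phi_U)\le L_2/4$. This yields the $L_1$ and $L_2$ bounds for small $\gamma$, uniformly in $U$ because only norms of $\phi_U$ enter.

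\emph{Exterior exclusion.} For $\norm{x}\ge2R$ we have $\chi=0$ and $S=1$, so $\norm{\nabla f_U}=b_0=G/4>\eps$ for small $\eps$. In the annulus $R\le\norm{x}<2R$ we have $S=1$. The radial component satisfies
\begin{equation*}
\left\langle\nabla f_U,\tfrac{x}{\norm{x}}\right\rangle\ge b_0-\cO(\sqrt\gamma)-\cO(\gamma)\ge b_0/2>\eps.
\end{equation*}
No point with $\norm{x}\ge R$ is $\eps$-stationary.

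\emph{Interior condition.} Inside $\norm{x}<R$, $\chi=1$, but $S$ may be nonzero for $\norm{x}>R/2$; this is the main obstacle. Suppose some coordinate satisfies $|(U^\top x/h)_i|\le1$. By Lemma~\ref{lem:chain}, the direction $u=Ue_j$ (for the first such index $j$) has $u^\top\nabla^2\phi_U u\le-\lambda c_0/h^2=-8\gamma$.

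I then bound the radial Hessian in the quadratic form. The term $W_R''\,\frac{xx^\top}{\norm{x}^2}$ is positive semidefinite since $S$ is nondecreasing, so it only helps nonnegativity. But the signs point the wrong way for an upper bound on $u^\top\nabla^2f_Uu$, so I bound it in magnitude instead: $W_R''=b_0S'/R=\cO(\gamma)$ with constant $b_0\norm{S'}_\infty/K_R$. Similarly
\begin{equation*}
\frac{W_R'}{\norm{x}}\left(I-\frac{xx^\top}{\norm{x}^2}\right)\preceq\frac{b_0}{R/2}I=\frac{2b_0\gamma}{K_R}I\preceq\gamma I,
\end{equation*}
using $K_R\ge2b_0$. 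I would enlarge $K_R$ if needed, which the statement permits, so that the radial contribution is at most $\gamma I$ in total. That gives $u^\top\nabla^2f_Uu\le-8\gamma+2\gamma<-\gamma$, contradicting SOSP. If the $S'$ constant is problematic, the fallback is to exploit that $\kappa=8/c_0$ already leaves room up to $7\gamma$.
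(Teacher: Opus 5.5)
Your argument is correct, but the interior step takes a cruder route than the paper and needs an extra condition on the constants. You bound the whole wall Hessian $\nabla^2[W_R(\norm{x})]=W_R''(r)\,ee^\top+(W_R'(r)/r)(I-ee^\top)$ in operator norm. The radial term $W_R''(r)=b_0S'(r/R)/R$ can then be as large as $\gamma\, b_0\norm{S'}_\infty/K_R$. This is the same order $\gamma$ as the negative curvature $-8\gamma$, so shrinking $\gamma$ does not help.

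By the mean value theorem, $\norm{S'}_\infty\ge2$, and otherwise $S$ is arbitrary. So the construction's stated constraint $K_R\ge\max\{1,2b_0\}$ does not make this term small enough. Your fallback using the $7\gamma$ margin works only when $b_0(\norm{S'}_\infty+2)/K_R<7$.

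Your fix of enlarging $K_R$ to at least $b_0\norm{S'}_\infty$ is legitimate. $K_R$ is constrained only from below, and enlarging it only changes $R/h$ by a constant factor in the later dimension requirement.

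The paper avoids the issue entirely. For $R/2<r<R$ and the first incomplete index $j$, one has $|\langle u_j,x/r\rangle|\le 2h/R=\cO(\gamma^2)$, so the hidden direction is nearly tangent to the sphere. The paper projects $u_j$ onto the tangent space, losing only $B_2\gamma(2|c|+c^2)$ in the $\phi_U$ part. It then sees only the tangential wall curvature $W_R'/r\le 2b_0/R\le\gamma$, which is exactly what $K_R\ge2b_0$ is for. Equivalently, you could keep $u=Ue_j$ and write $u^\top\nabla^2[W_R(\norm{x})]u=W_R''c^2+(W_R'/r)(1-c^2)$ with $c^2=\cO(\gamma^4)$, so the $S'$ term becomes negligible.

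The paper's argument therefore holds for every admissible $K_R$ and every choice of $S$, while yours ties $K_R$ to $\norm{S'}_\infty$. Your admissibility and exterior-exclusion parts match the paper's.
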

The radial derivative outside the core is bounded below by $G/8$ for sufficiently small tolerances. Inside the transition annulus, projecting an incomplete hidden direction onto the tangent space preserves its negative curvature, while the wall contributes only $\cO(1/R)$. Appendix~\ref{app:technical} verifies all constants and derivatives.

\subsubsection{Arbitrary adaptive queries and full-Hessian information}
Choose $U$ uniformly at random from orthonormal $n$-frames. Fix an arbitrary randomized algorithm, with internal coins independent of $U$, and a call cap $N<n$, for example $N=\lfloor n/2\rfloor$. A gradient and a Hessian at the same point may be charged as two calls; the argument below is stronger and permits an entire value--gradient--Hessian jet in a single call.

Construct an ideal transcript sequentially, exposing columns of $U$ only to the analysis, not as additional data to the algorithm. At call $t$, the query is measurable with respect to the algorithm's coins and the first $t-1$ exposed columns, because every preceding ideal response is a function of those variables. Expose $u_t$ in the analysis. If the query norm is greater than $2R$, return only the radial wall jet, independent of $U$. Otherwise return the jet of the cutoff version of $\lambda q_t(U_{1:t}^\top x/h)$ plus the same wall. The algorithm receives just this ideal oracle response; no hidden column is supplied separately.

Conditional on the preceding columns, each unrevealed $u_i$ is uniformly distributed on the unit sphere in an orthogonal complement of dimension at least $d-N$. For any already chosen query of norm at most $2R$,
\begin{equation*}
\bbP(|u_i^\top x|>h/2\mid\text{past columns and coins})
 \le 2\exp(-c(d-N)h^2/R^2).
\end{equation*}
The same bound applies to the output, if its norm is at most $2R$, conditionally on the first $N$ columns. Take
\begin{equation*}
d\ge n+N+C(R/h)^2\log(Cn(N+1)).
\end{equation*}
A union bound over all calls, unrevealed columns, and the final output makes the probability of any large unrevealed projection at most $1/12$. Here $d=\cO(\gamma^{-4}\log(1/\gamma)+\gamma^{-3})$, with constants depending only on fixed parameters.

On the complementary event, Lemma~\ref{lem:chain} says that every ideal jet is exactly the full jet of the single fixed function $f_U$. Consequently the ideal and true transcripts coincide inductively. The final output either has norm at least $R$, in which case its gradient is too large, or has norm less than $R$, in which case its $(N+1)$st chain coordinate has magnitude at most $1/2$ and its Hessian has an eigenvalue below $-\gamma$. Thus the average success probability of any $N$-call algorithm on this distribution is at most $1/12$. Some fixed $U$ is therefore hard for that algorithm. This proves $\Omega(n)=\Omega(\gamma^{-3})$ total complexity without restricting query norms, algorithmic spans, or the returned Hessian matrix.

For expected budgets, suppose a uniformly successful algorithm had worst-case expected total cost less than $N/6$. Truncating at $N$ calls reduces its success probability by at most $1/6$, leaving at least $1/2$ success on every instance. This contradicts the fixed-cap lower bound just proved. Hence the expected lower bound has the same order.

\subsection{A First-Order Lower Bound with Persistent Handles}
\label{sec:firstorder-lb}
The purpose of this section is to close the remaining $\gamma>\eps$ regime. A standard fresh-sample lower bound is insufficient: repeated evaluations of one sampled loss can reveal its entire local geometry. We therefore construct each sample loss from a finite coefficient table and prove the lower bound even for a stronger stateful oracle that reveals the relevant table entry on its first use. Reusing that handle can then provide no additional information about the current hidden block.

\subsubsection{Third-order reverse stick-breaking localization}
The conservative scalar embedding uses the reverse stick-breaking weights
\begin{equation*}
P_p(x)=\prod_{k=p+1}^{T}(1-b_k(x_k)),\qquad
 w_0=P_0,\qquad w_i=b_iP_i\quad(1\le i\le T),
\end{equation*}
where the blocks $x_i$ are disjoint. The weights are nonnegative and sum to one. We need one derivative order beyond the length-independent value, gradient, and Hessian bounds used in the fresh-oracle construction.

\begin{lemma}[Third-order reverse stick localization]\label{lem:thirdstick}
Let $\beta_i=b_i(1-b_i)$. Suppose the $C^3$ gates satisfy
\begin{equation*}
\norm{Db_i}^2\le \ell_1^2\beta_i,\qquad
 \norm{D^2b_i}^2\le \ell_2^2\beta_i,\qquad
 \norm{D^3b_i}\le \ell_3
\end{equation*}
uniformly. Let $V_0,\ldots,V_T$ be scalar functions on the full product space with
\begin{equation*}
\norm{D^rV_i}\le v_r,
 \qquad r=0,1,2,3,
\end{equation*}
where $D^0V_i=V_i$. Then $N=\sum_iw_iV_i$ obeys, with universal constants independent of $T$,
\begin{align}
 |N|&\le v_0,\label{eq:stick0}\\
 \norm{DN}&\le v_1+C v_0\ell_1,\label{eq:stick1}\\
 \norm{D^2N}&\le v_2+C\{v_1\ell_1+v_0(\ell_2+\ell_1^2)\},\label{eq:stick2}\\
 \norm{D^3N}&\le v_3+C\{v_2\ell_1+v_1(\ell_2+\ell_1^2)
 +v_0(\ell_3+\ell_1\ell_2+\ell_1^3)\}.\label{eq:stick3}
\end{align}
\end{lemma}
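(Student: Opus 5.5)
By the Leibniz rule, $D^rN=\sum_{j\le r}\binom rj\sum_i D^jw_i\,D^{r-j}V_i$. The lemma therefore reduces to length-independent bounds on the weighted sums of weight derivatives. Writing $S_j:=\sup_{\norm h=1}\sum_i|D^jw_i[h,\ldots,h]|$, I would aim for
\begin{equation*}
S_0=1,\qquad S_1\le C\ell_1,\qquad S_2\le C(\ell_2+\ell_1^2),\qquad S_3\le C(\ell_3+\ell_1\ell_2+\ell_1^3).
\end{equation*}
For symmetric multilinear forms, polarization converts diagonal bounds into operator norms at the cost of a universal constant. The $j=0$ terms give $\sum_iw_i\norm{D^rV_i}\le v_r$ because the weights are nonnegative and sum to one. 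Each term with $j\ge1$ is bounded by $S_j v_{r-j}$. Together these give \eqref{eq:stick0}--\eqref{eq:stick3}.

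To bound $S_j$, I fix a unit direction $h=(h_1,\ldots,h_T)$ split into blocks, and set $\theta_k=\norm{h_k}$, so that $\sum_k\theta_k^2\le1$. Along the line $x+th$, each gate becomes a scalar function $b_k(t)$. The bounds on $b_k$ now read
\begin{equation*}
|b_k'|\le\ell_1\theta_k\sqrt{\beta_k},\qquad |b_k''|\le\ell_2\theta_k^2\sqrt{\beta_k},\qquad |b_k'''|\le\ell_3\theta_k^3 .
\end{equation*}
The key device is to differentiate the tail products $P_p=\prod_{k>p}(1-b_k)$ through logarithmic derivatives. The crude alternative, expanding the product by Leibniz, produces sums over pairs and triples of indices that grow with $T$. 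I want to avoid those.

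The key steps for $S_1$ are as follows. Since $w_i=b_iP_i$, the $i$th term satisfies
\begin{equation*}
|w_i'|\le|b_i'|P_i+b_iP_i\sum_{k>i}\frac{|b_k'|}{1-b_k}.
\end{equation*}
For the first term, I use $\sqrt{\beta_i}\le\sqrt{b_i}$ together with Cauchy--Schwarz:
\begin{equation*}
\sum_i|b_i'|P_i\le\ell_1\Big(\sum_i\theta_i^2\Big)^{1/2}\Big(\sum_ib_iP_i^2\Big)^{1/2}\le\ell_1 .
\end{equation*}
This works because $\sum_i b_iP_i^2\le\sum_iw_i\le1$. For the second term, I exchange the order of summation. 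The partial sums satisfy $\sum_{i<k}b_iP_i=P_{k-1}-P_0\le P_{k-1}=(1-b_k)P_k$. This cancels the factor $1/(1-b_k)$ and leaves $\sum_k|b_k'|P_k$, which is the sum already bounded.

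The same mechanism is meant to handle second and third order. Expanding $P_i''$ produces $\sum_{k>i}b_k''/(1-b_k)$ terms and pairwise terms $\sum_{k\ne k'}b_k'b_{k'}'/((1-b_k)(1-b_{k'}))$. After the telescoping exchange, the pairwise terms become $\sum_{k<k'}|b_k'||b_{k'}'|P_{k'}/(1-b_{k'})$-type sums. I would control these with $|b_k'|\le\ell_1\theta_k\sqrt{\beta_k}$ and the inequality $\sum_k\theta_k\sqrt{\beta_k}P_k\cdot(\cdot)\le(\sum\theta_k^2)^{1/2}(\cdots)^{1/2}$, applied iteratively. Each factor $1/(1-b_k)$ has to be absorbed by a $\sqrt{\beta_k}$ or by the telescoping identity.

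The main obstacle is the third-order bound, specifically the term $b_i'''$ and the terms $b_k'''/(1-b_k)$. These carry no $\sqrt{\beta}$ factor, so neither the Cauchy--Schwarz step nor the absorption of $1/(1-b_k)$ applies. For $b_i'''P_i$ itself I would simply use $|b_i'''|\le\ell_3\theta_i^3$ and $\sum_i\theta_i^3\le1$. For the dangerous term inside $P_i'''$, I would avoid dividing altogether. I would write $P_i'''$ as the ordered sum over $k>i$ of $-b_k'''\prod_{k'>i,k'\ne k}(1-b_{k'})$, plus lower-order terms. The partial-sum identity then gives $\sum_{i<k}b_i\prod_{k'>i,k'\ne k}(1-b_{k'})\le1$, which leaves $\sum_k\ell_3\theta_k^3\le\ell_3$. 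Checking that this division-free rewriting is compatible with the mixed pairwise and triple terms is where I expect most of the effort.
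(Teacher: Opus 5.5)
Your proposal is correct, and it proves the lemma by a genuinely different route from the paper.

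\textbf{The paper's route.} The paper never works with the weights directly. It writes $N=\bbE_BV_L$ for independent Bernoulli$(b_i)$ variables. It then expresses $DN$, $D^2N$, $D^3N$ through likelihood ratios $S_u$, $K_{u,v}$, $H_{u,v,w}$, which are polynomials in the centered scores $s_i=(B_i-b_i)/\beta_i$. Length-independence comes from orthogonality of monomials over distinct index subsets together with $\bbE s_i^2=\beta_i^{-1}$. The unweighted $D^3b_i$ term is handled in $L^1$ via $\bbE|s_i|=2$, and gates touching $0$ or $1$ are handled by regularization.

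\textbf{Your route.} You reduce by Leibniz to the total-variation quantities $S_j=\sup_h\sum_i|D^jw_i[h^j]|$. You obtain length-independence deterministically, from the stick-breaking telescoping identity and Cauchy--Schwarz along increasing index chains. The two targets are consistent. Since $D^jw_i[h^j]=\bbE[\mathbf 1\{L=i\}\Lambda_j]$ for the corresponding likelihood ratio $\Lambda_j$, one has $S_j\le\bbE|\Lambda_j|$, so the paper's moment bounds imply yours.

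\textbf{Comparison.} The paper's argument is shorter, works with three independent directions (no diagonal reduction), and extends mechanically to any order. Yours is elementary and uses only the ordered structure of the weights. Written without divisions, it needs no regularization at gates equal to $0$ or $1$. The price is index bookkeeping that grows with the order.

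\textbf{The third-order check you defer.} It goes through if you expand everything without division. Each $w_i=b_i\prod_{k>i}(1-b_k)$ is affine in every $b_k$ separately. This is the counterpart of the paper's fact that the Bernoulli mass is affine in each $b_i$, which is why only distinct-index monomials occur. Hence $D^jw_i[h^j]$ is a sum of terms of the form
\[
\Bigl(\text{product of derivatives of } b_k \text{ over an increasing index set } K\Bigr)\times\prod_{m>i,\,m\notin K}(1-b_m),
\]
where $i\in K$ exactly when $b_i$ is differentiated, and then $i=\min K$. Bound these terms as follows:
\begin{itemize}
\item Sum an undifferentiated $b_i$ over $i<\min K$ using $\sum_{i<k}b_i\prod_{i<m<k}(1-b_m)\le1$.
\item Absorb first-derivative factors one index at a time with $\sum_{k<k'}\theta_k\sqrt{\beta_k}\prod_{k<m<k'}(1-b_m)\le1$ (Cauchy--Schwarz plus the same identity).
\item Absorb the outermost first-derivative factor with $\sum_k\theta_k\sqrt{\beta_k}P_k\le1$.
\item Absorb second- or third-derivative factors with $\sum_k\theta_k^2\le1$ or $\sum_k\theta_k^3\le1$.
\end{itemize}
This yields $S_2\le C(\ell_2+\ell_1^2)$ and $S_3\le C(\ell_3+\ell_1\ell_2+\ell_1^3)$.

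\textbf{One slip to fix.} After the exchange, the pairwise second-order sum is
\[
\sum_{k<k'}|b_k'||b_{k'}'|\,P_k/(1-b_{k'})=\sum_{k<k'}|b_k'||b_{k'}'|\prod_{k<m<k'}(1-b_m)\,P_{k'}.
\]
With $P_{k'}/(1-b_{k'})$, as you wrote, the factor $1/(1-b_{k'})$ would not be absorbed.
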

\paragraph{Proof mechanism.}
Introduce independent Bernoulli variables with success probabilities $b_i$ and let $L$ be the last successful index. Then $N=\bbE_BV_L$. The first, second, and third derivatives of the Bernoulli product mass decompose into score monomials of degrees at most three. Monomials indexed by different subsets are orthogonal. The weighted gate hypotheses control their first two moments without a factor depending on $T$; the degree-one third-order term is controlled in $L^1$. The product rule then yields the displayed bounds. Appendix~\ref{app:technical} proves these score estimates and handles gates attaining zero or one.

\subsubsection{A slow-coordinate conservative chain}
Use the flat gate
\begin{equation*}
\rho(t)=
 \begin{cases}
 0,&t\le0,\\
 35t^4-84t^5+70t^6-20t^7,&0<t<1,\\
 1,&t\ge1.
 \end{cases}
\end{equation*}
Besides bounded derivatives through order three, it satisfies
\begin{equation}\label{eq:rho-weighted}
 (\rho')^2\le C\rho(1-\rho),\qquad
 (\rho'')^2\le C\rho(1-\rho).
\end{equation}
Indeed, on $[0,1]$ one has $\rho(1-\rho)\ge t^4(1-t)^4$, while
$\rho'=140t^3(1-t)^3$ and
$\rho''=420t^2(1-t)^2(1-2t)$.

Let $D$ be a block size, let
\begin{equation*}
\alpha=D^{1/4},
\end{equation*}
and draw independent hidden vectors $\theta_j\in\{-1,1\}^D$. For $x_j\in\bbR^D$, put
\begin{equation*}
q_j(x_j)=\frac2D\sum_{\ell=1}^D\theta_{j\ell}\tanh(\alpha x_{j\ell}),
 \qquad
 b_j(x_j)=\rho(2(q_j(x_j)-1/4)).
\end{equation*}
For the scalar gate $b(q)=\rho(2(q-1/4))$, define
\begin{equation*}
h(q)=-2(3+q_1)-4\sum_{j=1}^{T-1}b(q_j)(3+q_{j+1}),
\end{equation*}
and the population objective
\begin{equation}\label{eq:slow-chain}
 \Phi_\Theta(x)=A\left[
 h(q(x))+\frac8D\sum_{j=1}^{T}\sum_{\ell=1}^{D}
 \log\cosh(\alpha x_{j\ell})
 \right].
\end{equation}

\begin{lemma}[Slow-chain geometry]\label{lem:slowgeom}
There are universal $C,c_0>0$ such that
\begin{align}
 \Phi_\Theta(0)-\inf\Phi_\Theta&\le CAT,\label{eq:slow-gap}\\
 \sup_x\norm{\nabla\Phi_\Theta(x)}&\le CA\alpha\sqrt{T/D},\label{eq:slow-grad-global}\\
 \sup_x\norm{\nabla^2\Phi_\Theta(x)}&\le CA\alpha^2/D,\label{eq:slow-hess}\\
 \operatorname{Lip}(\nabla^2\Phi_\Theta)&\le CA\alpha^3/D.\label{eq:slow-third}
\end{align}
If $q_j(x_j)\le1/2$ for at least one block, then
\begin{equation}\label{eq:slow-stationarity}
 \norm{\nabla\Phi_\Theta(x)}\ge c_0A\alpha/\sqrt D.
\end{equation}
\end{lemma}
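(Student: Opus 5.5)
We verify the four regularity bounds by an explicit chain-rule computation in the scalar variables $q_j$. The stationarity bound~\eqref{eq:slow-stationarity} comes from a coordinatewise argument on the first block that fails the threshold.

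\emph{Setup.} Write $u_{j\ell}=\theta_{j\ell}\tanh(\alpha x_{j\ell})\in(-1,1)$, so that $q_j=(2/D)\sum_\ell u_{j\ell}\in[-2,2]$. Put $h_j:=\partial h/\partial q_j$. For $j\ge2$,
\begin{equation*}
h_j=-4b(q_{j-1})-4b'(q_j)(3+q_{j+1}),
\end{equation*}
with the second term omitted at $j=T$. At $j=1$ the first term is replaced by $-2$.

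\emph{Gap.} Since $b(0)=\rho(-1/2)=0$, we get $\Phi_\Theta(0)=-6A$. Also $h\ge-10-20(T-1)$ and $\log\cosh\ge0$, which gives~\eqref{eq:slow-gap}.

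\emph{Gradient.} The coordinate derivative is
\begin{equation*}
\partial_{x_{j\ell}}\Phi_\Theta=\frac{2A\alpha}{D}\bigl[h_j\theta_{j\ell}\operatorname{sech}^2(\alpha x_{j\ell})+4\tanh(\alpha x_{j\ell})\bigr].
\end{equation*}
The quantities $b,b',b'',b'''$ are bounded and $|q|\le2$, so each $h_j$ is bounded. Every coordinate is therefore $\cO(A\alpha/D)$. Summing squares over the $TD$ coordinates gives~\eqref{eq:slow-grad-global}.

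\emph{Hessian and third derivative.} Use $\norm{\nabla q_j}\le2\alpha/\sqrt D$. Also $D^2q_j$ and $D^3q_j$ are diagonal, with entries $\cO(\alpha^2/D)$ and $\cO(\alpha^3/D)$, and the $\log\cosh$ term is diagonal with the same scales. The Hessian splits into two parts:
\begin{itemize}
\item diagonal terms $h_jD^2q_j$ and the $\log\cosh$ term, of size $\cO(A\alpha^2/D)$;
\item rank-one terms $\partial_{q_jq_k}h\,\nabla q_j\nabla q_k^\top$, nonzero only for $|j-k|\le1$, each of norm $\cO(A\alpha^2/D)$.
\end{itemize}
A block-tridiagonal matrix with uniformly bounded blocks has bounded operator norm, which gives~\eqref{eq:slow-hess}.

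The third derivative has three kinds of terms:
\begin{itemize}
\item $h'''$-type terms $\nabla q\otimes\nabla q\otimes\nabla q$ on neighboring blocks, of size $(\alpha/\sqrt D)^3\le\alpha^3/D$;
\item mixed terms $h''\,\nabla q\otimes D^2q$, of size $\alpha^3/D^{3/2}$;
\item diagonal terms $h_jD^3q_j$ and the $\log\cosh$ third derivative, of size $\alpha^3/D$.
\end{itemize}
As in~\eqref{eq:qreg}, the diagonal trilinear parts are bounded by $\max|c|\sum_i|v_i|^3\le\max|c|\norm v^3$. The banded rank-one parts are bounded by Cauchy--Schwarz with bandwidth three. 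This gives~\eqref{eq:slow-third}. The bookkeeping here, ensuring no factor of $T$ appears, is the part that needs care, but it follows the curvature-chain pattern.

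\emph{Stationarity.} This is the main step. Let $j$ be the \emph{first} index with $q_j\le1/2$.
\begin{itemize}
\item If $j\ge2$, then $q_{j-1}>1/2$, so $b(q_{j-1})>\rho(1/2)=1/2$.
\item Since $b'\ge0$ and $3+q_{j+1}\ge1$, in all cases (including $j=1$) we get $h_j\le-2$, and $|h_j|\le C$.
\end{itemize}
Multiply the $(j,\ell)$ gradient coordinate by $\theta_{j\ell}D/(2A\alpha)$. The result is
\begin{equation*}
\varphi(u_{j\ell}),\qquad \varphi(u):=-|h_j|(1-u^2)+4u.
\end{equation*}
The function $\varphi$ has a unique root $u^\ast\in(0,1)$, namely $u^\ast=2(\sqrt{1+h_j^2/4}-1)/|h_j|$. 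This expression increases with $|h_j|$, so $u^\ast\ge\sqrt2-1>0.41$.

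The behavior of $\varphi$ away from $u^\ast$ is as follows:
\begin{itemize}
\item On $[-1,0]$, $\varphi\le-3/2$: if $|u|\ge1/2$ then $4u\le-2$; otherwise $-|h_j|(1-u^2)\le-3/2$.
\item On $[0,u^\ast-\delta]$, $\varphi$ is increasing and bounded by $-c(\delta)$.
\end{itemize}
Now $q_j\le1/2$ means the block average of $u_{j\ell}$ is at most $1/4$. Since $u_{j\ell}\le1$, a Markov-type count shows that a constant fraction of the indices $\ell$ have $u_{j\ell}\le u^\ast-\delta$, for a fixed $\delta$, e.g.\ $\delta=0.08$. On those coordinates $|\varphi|\ge c$.

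Hence
\begin{equation*}
\norm{\nabla\Phi_\Theta}^2\ge cD\,(A\alpha/D)^2,
\end{equation*}
which is~\eqref{eq:slow-stationarity}.
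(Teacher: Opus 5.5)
Your proposal is correct and follows the paper's proof essentially step for step. The regularity bounds use the same chain-rule and bounded-bandwidth aggregation. For the stationarity bound you take the same first failing block, with coefficient $\partial_jh\le-2$, and get the same coordinatewise root $u^\ast\ge\sqrt2-1$. The only variation is your final counting step. The paper instead uses the factorization bound $|z_c(r)|\ge4|r-r_c|$ on $[-1,1]$, followed by Cauchy--Schwarz and the average $q_j/2\le1/4$; this avoids your case split and gives the explicit constant $4(\sqrt2-5/4)$.

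One small slip: showing that a constant fraction of the $u_{j\ell}$ lie below $u^\ast-\delta$ needs the lower bound $u_{j\ell}\ge-1$, so that no coordinate is arbitrarily negative. The upper bound $u_{j\ell}\le1$ that you cite does not give this.
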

The derivative bounds follow from interactions between adjacent blocks. For stationarity, take the first block with $q_j\le1/2$; the incoming chain coefficient has magnitude at least two, while the regularizer forces the block gradient away from zero. The scalar factorization and the complete derivative calculation appear in Appendix~\ref{app:technical}.

Choose
\begin{equation}\label{eq:slow-scales}
 A=\frac{4\eps\sqrt D}{c_0\alpha}
 =\frac4{c_0}\eps D^{1/4},
 \qquad
 T=\left\lfloor c_T\frac{\Delta}{A}\right\rfloor.
\end{equation}
with $c_T$ small enough that the initial gap is at most $\Delta/2$. Then every $\eps$-stationary point has $q_j>1/2$ for all $j$. Moreover,
\begin{equation*}
\operatorname{Lip}(\nabla\Phi_\Theta)=\cO(\eps D^{-1/4}),
 \qquad
 \operatorname{Lip}(\nabla^2\Phi_\Theta)=\cO(\eps),
\end{equation*}
and \eqref{eq:slow-grad-global}, together with $T\le C\Delta/A$, gives
\begin{equation*}
\sup_x\norm{\nabla\Phi_\Theta(x)}
 \le C\sqrt{\Delta\eps}\,D^{-1/8}.
\end{equation*}
Thus all population regularity requirements hold for sufficiently small $\eps$.

\subsubsection{One scalar sampled loss with a fixed sample-wise Hessian Lipschitz constant}
For one sample handle and each block, draw independent coefficients
\begin{equation*}
Z_{j\ell}\in\{-R,R\},\qquad
 \Pr(Z_{j\ell}=R)=\frac{1+\theta_{j\ell}/R}{2},
 \qquad R\ge2.
\end{equation*}
Define
\begin{equation*}
\widehat q_j(x_j;Z_j)=\frac2D\sum_\ell Z_{j\ell}\tanh(\alpha x_{j\ell}),
 \qquad
 n_j=-2(\widehat q_j-q_j).
\end{equation*}
With $P_p=\prod_{k=p+1}^{T}(1-b_k)$, set
\begin{equation}\label{eq:slow-sample}
 \eta_\Theta(x;Z)=P_0n_1+2\sum_{p=1}^{T-1}b_pP_pn_{p+1},
 \qquad
 F_\Theta(x;Z)=\Phi_\Theta(x)+A\eta_\Theta(x;Z).
\end{equation}
The coefficient means give $\bbE_ZF_\Theta=\Phi_\Theta$ and the same identity for the first two derivatives.

For every block and realization,
\begin{align}
 |n_j|&\le C(R+1),\nonumber\\
 \norm{Dn_j}&\le C(R+1)\alpha/\sqrt D,\nonumber\\
 \norm{D^2n_j}&\le C(R+1)\alpha^2/D,\nonumber\\
 \norm{D^3n_j}&\le C(R+1)\alpha^3/D.\label{eq:n-derivatives}
\end{align}
The gate estimates, using \eqref{eq:rho-weighted}, are
\begin{align}
 \norm{Db_j}^2&\le C\frac{\alpha^2}{D}b_j(1-b_j),\nonumber\\
 \norm{D^2b_j}^2&\le C\frac{\alpha^4}{D^2}b_j(1-b_j),\nonumber\\
 \norm{D^3b_j}&\le C\frac{\alpha^3}{D}.\label{eq:gate-three}
\end{align}
Applying Lemma~\ref{lem:thirdstick} to \eqref{eq:slow-sample} therefore yields, independently of $T$,
\begin{align}
 |\eta_\Theta|&\le C(R+1),\nonumber\\
 \norm{\nabla\eta_\Theta}&\le C(R+1)\alpha/\sqrt D,\nonumber\\
 \norm{\nabla^2\eta_\Theta}&\le C(R+1)\alpha^2/D,\nonumber\\
 \norm{D^3\eta_\Theta}&\le C(R+1)\alpha^3/D.\label{eq:eta-three}
\end{align}

Put
\begin{equation}\label{eq:R-slow}
 s_\star=\min\{1,\sigma_g,\sigma_H,L_1,L_2,G\},
 \qquad R=c_Rs_\star/\eps.
\end{equation}
where $c_R$ is a sufficiently small universal constant. Since
$A\alpha/\sqrt D=4\eps/c_0$ and $\alpha^2/\sqrt D=1$, equations
\eqref{eq:eta-three} imply
\begin{align*}
 \sup_{x,Z}\norm{\nabla F_\Theta-\nabla\Phi_\Theta}
 &\le C\eps(R+1)\le\sigma_g,\\
 \sup_{x,Z}\norm{\nabla^2F_\Theta-\nabla^2\Phi_\Theta}
 &\le C\eps(R+1)D^{-1/4}\le\sigma_H,\\
 \sup_{x,Z}\norm{D^3F_\Theta(x;Z)}
 &\le C\eps+C\eps(R+1)\le L_2,\\
 \sup_{x,Z}\norm{\nabla^2F_\Theta(x;Z)}
 &\le C\eps D^{-1/4}+C\eps(R+1)D^{-1/4}\le L_1
\end{align*}
for all sufficiently small $\eps$. Thus the sampled losses satisfy the required \emph{uniform} sample-wise gradient and Hessian smoothness constants. This is the point of the scale $\alpha=D^{1/4}$; the original $\alpha=\sqrt D$ chain would leave a sample third derivative growing polynomially with $D$.

\subsubsection{Persistent handles and stagewise information}
A handle $r$ contains an independent table
$Z^{(r)}=(Z^{(r)}_{j\ell})_{j,\ell}$ drawn once from the distribution above and retained forever. Grant the algorithm the following stronger charged call: at a chosen point and handle, return the value, full gradient, and full Hessian of the sampled scalar loss, together with every coefficient array of that handle that has become relevant at the current frontier. This dominates either a gradient or a Hessian evaluation in the original model.

Let a block be active when $q_j(x_j)>1/4$. If $x_j$ is independent of a fresh hidden sign vector $\theta_j$, then, uniformly over arbitrarily large $x_j$,
\begin{equation}\label{eq:slow-tail}
 \Pr(q_j(x_j)>1/4)
 \le2e^{-D/128}.
\end{equation}
This is Hoeffding's inequality applied to
$\sum_\ell\theta_{j\ell}\tanh(\alpha x_{j\ell})$.

At a point whose largest active index is $p<T$, set every future gate equal to the constant zero and truncate the products accordingly. Flatness of the zero gate through order two makes the resulting value--gradient--Hessian jet identical to that of the full loss. In the truncated scalar loss, the last population link and its centered perturbation combine as
\begin{equation*}
2b_pW_{\theta_{p+1}}+2b_p(\widehat W_{p+1}-W_{\theta_{p+1}})
 =2b_p\widehat W_{p+1};
\end{equation*}
for $p=0$, the same identity is $W_{\theta_1}+n_1=\widehat W_1$. Hence the complete order-two jet depends only on $\theta_{\le p}$, earlier coefficient arrays, and the next coefficient array $Z^{(r)}_{p+1}$; there is no remaining direct dependence on $\theta_{p+1}$. Replacing that array by $D$ fair signs defines a reference response independent of $\theta_{p+1}$. On its first revelation for a fixed handle,
\begin{equation}\label{eq:handle-kl}
 \operatorname{KL}(\mathcal L(Z^{(r)}_{p+1}\mid\theta_{p+1})
 \|\mathcal L(Z^{0,(r)}_{p+1}))
 \le CD/R^2=: \kappa.
\end{equation}
After that coefficient array has been revealed, every later response using the same handle and frontier is a deterministic function of the transcript and contributes zero additional conditional relative entropy. Thus reuse cannot multiply information from one handle.

\begin{lemma}[Sequential cost with persistent handles]\label{lem:persistent-info}
Suppose the hidden blocks are independent, the far-activation bound is $ce^{-c'D}$, every successful output activates all $T$ blocks, and the first revelation of one handle--next-block array has conditional KL at most $\kappa$ from a reference array independent of that next block. There are universal constants $c,C>0$ such that, if
\begin{equation*}
D\ge C\log(16(N+1)T),\qquad
 n_0=\lfloor cD/\kappa\rfloor\ge1,
\end{equation*}
then every randomized adaptive algorithm making $N\le Tn_0/2$ charged calls has success probability less than $2/3$ on some deterministic hidden instance, even with arbitrary handle reuse.
\end{lemma}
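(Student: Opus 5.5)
We prove the lemma by a stagewise progress argument. Draw $\Theta=(\theta_1,\ldots,\theta_T)$ uniformly and independently, fix the algorithm's coins, and define the \emph{frontier} $P_t$ after call $t$ as the largest index $p$ such that blocks $1,\ldots,p$ have all been activated at some queried point. Frontier advances that skip a block fall under the far-activation bound. The goal is to show that, with probability at least $1/3$, the frontier never reaches $T$ within $N$ calls and the output does not activate all blocks. Averaging over $\Theta$ then produces a deterministic hard instance.

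\textbf{Step 1: far activations are negligible.} Consider a query or the output at frontier $p$. By the jet identity preceding~\eqref{eq:handle-kl}, the responses up to that time depend only on $\theta_{\le p}$, the coins, and revealed coefficient arrays, where those arrays concern blocks $\le p+1$. Hence every block $j\ge p+2$ is independent of the query point. By the far-activation bound, $\Pr(q_j>1/4)\le ce^{-c'D}$ for each such block. A union bound over the $N+1$ points and $T$ blocks bounds the total far-activation probability by $(N+1)Tce^{-c'D}\le1/16$ once $D\ge C\log(16(N+1)T)$. On the complementary good event, the frontier advances by at most one per call. The same holds at the output, which succeeds only if the frontier is already $T$, or $T-1$ with block $T$ activated by the output.

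\textbf{Step 2: per-stage information budget.} Work on the good event and fix a stage $p+1$. Couple the true transcript with a reference transcript in which every first revelation of a handle's block-$(p+1)$ array is replaced by fair signs $Z^{0,(r)}_{p+1}$. Under the reference, the transcript during the stage is independent of $\theta_{p+1}$. Therefore the probability that any query made while the frontier equals $p$ activates block $p+1$ is at most the far-activation bound $ce^{-c'D}$, summed over the stage's queries.

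Next apply the chain rule for KL with stateful accounting. A call costs at most $\kappa$ only if it reveals a new handle--block array, and later calls on the same handle and frontier add zero conditional divergence. So if stage $p+1$ uses $n_{p+1}$ calls, the KL between the true and reference stage transcripts is at most $\kappa n_{p+1}$. Pinsker's inequality, or better the Bretagnolle--Huber inequality, converts this into
\[
\Pr(\text{advance within the stage})\le ce^{-c'D}n_{p+1}+\sqrt{\kappa n_{p+1}/2}.
\]
This bound is far too weak to use as stated, so the main obstacle lies here. A stage with $n_0=cD/\kappa$ calls gives only a constant bound, not a small one.

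The fix I would try first is a sharper change-of-measure. Activation requires $\sum_\ell\theta_\ell\tanh(\alpha x_\ell)>D/8$, which under the reference is an event of probability $e^{-\Omega(D)}$. A data-processing argument with the bound $\Pr_{\rm true}(E)\le(\mathrm{KL}+\log2)/\log(1/\Pr_{\rm ref}(E))$ then gives
\[
\Pr(\text{advance in stage})\le\frac{\kappa n_{p+1}+\log2}{c'D}.
\]
With $n_0=\lfloor cD/\kappa\rfloor$, a stage of at most $2n_0$ calls therefore advances with probability at most $1/4$ for suitable constants. The $\log 2$ term needs $D\gtrsim1$, which holds.

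\textbf{Step 3: counting stages.} Since $N\le Tn_0/2$, at most $T/4$ stages can receive more than $2n_0$ calls. At least $3T/4$ stages must therefore be crossed on budgets of at most $2n_0$ calls each, and each such crossing succeeds conditionally with probability at most $1/4$, given the past. Stages use fresh, independent $\theta_j$ and condition on the past via stopping times. A supermartingale or union argument then shows that all those crossings succeed with probability at most $4^{-3T/4}$, which is at most $1/16$ when $T\ge2$. The case of small $T$ is handled by the same single-stage bound.

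Adding the three failure contributions gives a success probability of at most $1/16+1/16+1/4<2/3$. Averaging over $\Theta$ yields a fixed $\Theta$ on which the success probability is below $2/3$, uniformly over arbitrary handle reuse.
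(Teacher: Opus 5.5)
Your proposal is correct and follows essentially the paper's route: a far-activation union bound, a per-stage reference continuation with fair arrays and stateful KL accounting, the change of measure $\operatorname{kl}(p\|q)\ge p\log(1/q)-\log 2$ (rightly replacing Pinsker), and budget counting, with your thresholds $2n_0$ and $3T/4$ in place of the paper's $n_0$, $T/2$ and Markov's inequality. Two details need tightening. First, the response that first brings the frontier to $p$ already reveals a block-$(p+1)$ array, so stage $p+1$ must be conditioned just before that response and that revelation charged to the stage, at a cost of one extra $\kappa\le cD$; this is what the paper does when it stops ``immediately before the response that first reaches index $i-1$''. Second, the bound $4^{-3T/4}$ ignores that the cheap stages are selected adaptively, but a union over subsets with a binomial factor, or simply Markov's inequality (giving at most $1/3$), still keeps the total failure budget below $2/3$.
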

\begin{proof}
Pad a shorter algorithm to a deterministic horizon $N$. Let $x_t$ and $r_t$ be its point and handle at response time $t$, as determined by its private seed and the preceding comparison transcript. Starting from $\nu_0=0$, define
\begin{equation*}
\rho_t=\max(\{j\le\min\{T,\nu_{t-1}+1\}:A_j(x_{t,j})\}\cup\{0\}),
 \qquad
 \nu_t=\max\{\nu_{t-1},\rho_t\},
\end{equation*}
and return the extended rich response of index $\rho_t$. Apply the same update to the final output, without drawing a response. Thus historical progress rises by at most one at a time.

Up to the instant before progress first exceeds a prefix $k$, this transcript is measurable with respect to $\theta_{\le k+1}$, the algorithmic seed, and the sampled coefficient tables already requested from those blocks. Every $\theta_j$ with $j>k+1$ therefore retains its independent prior. A conditional union bound using the far-activation hypothesis gives
\begin{equation}\label{eq:persistent-far}
 \Pr\!\left(\exists t\le N+1,\ \exists j>\nu_{t-1}+1:
 A_j(x_{t,j})\right)
 \le C(N+1)Te^{-c'D}.
\end{equation}
Outside this event, the comparison and actual stateful-oracle transcripts coincide.

For stage $i$, let $\tau_i$ be the first time $\nu_t\ge i$, and let
\begin{equation*}
L_i=\sum_{t=1}^{N}\mathbf 1\{t<\tau_i,\ \rho_t=i-1\}.
\end{equation*}
Each response has only one historical index, hence pathwise $\sum_iL_i\le N$. At the entry to stage $i$, stop immediately before the response that first reaches index $i-1$. The stopped history and the entry query depend only on $\theta_{<i}$ and independent randomization, so conditioning on them does not alter the prior of $\theta_i$.

Continue the algorithm while computing response indices from blocks at most $i-1$, and stop after $n_0$ responses of index $i-1$. Define a reference continuation by replacing, on the first index-$(i-1)$ use of each handle $r$, its array $Z_i^{(r)}$ by a fair array independent of $\theta_i$. If that handle--block array appeared earlier in the continuation, the next response is already a deterministic function of the common transcript and its conditional KL contribution is zero. Otherwise \eqref{eq:handle-kl} bounds the contribution by $\kappa$. The conditional chain rule therefore gives
\begin{equation}\label{eq:persistent-stage-kl}
 \operatorname{KL}(P_i\|Q_i)\le n_0\kappa\le cD.
\end{equation}
Under $Q_i$, the complete adaptive point sequence, including points chosen from previously revealed fair arrays of block $i$, is independent of $\theta_i$. Thus the chance that a continuation query or the output activates block $i$ before the quota is at most
\begin{equation*}
q_D=C(N+1)e^{-c'D}.
\end{equation*}
If $p_i$ denotes that probability in the true continuation, data processing and
$\operatorname{kl}(p\|q)\ge p\log(1/q)-\log2$ imply
\begin{equation*}
n_0\kappa\ge p_i\log(1/q_D)-\log2.
\end{equation*}
The stipulated lower bound on $D$ and a sufficiently small choice of the constant in $n_0$ yield $p_i\le1/32$, uniformly over all entry histories.

If all $T$ stages are completed and $N\le Tn_0/2$, then at least $T/2$ stages have $L_i<n_0$. Markov's inequality therefore gives completion probability at most
\begin{equation*}
\frac2T\sum_{i=1}^T\Pr(\tau_i\le N+1,\ L_i<n_0)\le\frac1{16}.
\end{equation*}
Increasing the logarithmic constant makes \eqref{eq:persistent-far} at most $1/16$. Hence the average success probability over the hidden-block prior is below $1/8$, and some deterministic hidden instance has success probability below $2/3$. The argument charges at most one biased-array revelation per response and never treats evaluations of a reused handle as independent.
\end{proof}

\begin{theorem}[Reusable-handle first-order endpoint]\label{thm:fo-lower}
Let
\begin{equation*}
D=\left\lceil C\log\!\left(2+\frac{\Delta}{\eps}
 +\frac{\Delta s_\star^2}{\eps^3}\right)\right\rceil.
\end{equation*}
For all sufficiently small $\eps$, in dimension
\begin{equation*}
d=DT=\cO\!\left(\frac{\Delta D^{3/4}}{\eps}\right),
\end{equation*}
there is an admissible sample-reuse instance for which every randomized adaptive algorithm needs
\begin{equation*}
\boxed{
 \Omega\!\left(\frac{\Delta s_\star^2}
 {\eps^3D^{1/4}}\right)
 }
\end{equation*}
total derivative evaluations to return an $\eps$-stationary point with probability at least $2/3$. The result remains true if every charged call returns the sampled value, full gradient, and full Hessian simultaneously.
\end{theorem}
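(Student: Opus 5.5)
The plan is to assemble the theorem from the ingredients already in this section: the slow chain $\Phi_\Theta$ of Lemma~\ref{lem:slowgeom} with the scales \eqref{eq:slow-scales}, the sampled losses \eqref{eq:slow-sample} with $R=c_Rs_\star/\eps$ from \eqref{eq:R-slow}, and the information bound of Lemma~\ref{lem:persistent-info}. The argument has three parts. First we check admissibility. Second we verify each hypothesis of Lemma~\ref{lem:persistent-info}. Third we compute $Tn_0/2$ and convert the fixed-cap bound into the stated $\Omega(\cdot)$, including its expected-budget form.

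\emph{Admissibility.} Take $x_0=0$ and $\alpha=D^{1/4}$, and let $d=DT$. Since $A=\frac4{c_0}\eps D^{1/4}$, we have
\[
T\le c_T\Delta/A=\cO\bigl(\Delta/(\eps D^{1/4})\bigr),
\]
which gives $d=\cO(\Delta D^{3/4}/\eps)$. The gap bound \eqref{eq:slow-gap} with $c_T$ small yields Assumption~\ref{ass:gap}. Unbiasedness (Assumption~\ref{ass:unbiased}) holds because $\bbE Z_{j\ell}=\theta_{j\ell}$ and $\eta_\Theta$ is linear in $Z$. The displayed bounds after \eqref{eq:R-slow}, derived from \eqref{eq:eta-three} through Lemma~\ref{lem:thirdstick}, give the almost-sure noise bounds, the sample Hessian bound $\le L_1$, and the sample third-derivative bound $\le L_2$. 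By the mean value theorem these are exactly the constants required in Assumptions~\ref{ass:smoothness} and~\ref{ass:noise}. The population gradient bound $C\sqrt{\Delta\eps}D^{-1/8}\le G$ holds for small $\eps$. All of this requires that $D$, which grows only logarithmically, not spoil smallness: $\eps D^{1/4}\to0$, $\eps(R+1)$ stays $\le c\,s_\star$, and $D^{-1/4}\le1$. These hold for sufficiently small $\eps$.

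\emph{Hypotheses of Lemma~\ref{lem:persistent-info}.} The blocks $\theta_j$ are independent by construction. The far-activation bound is \eqref{eq:slow-tail}, which is of the form $ce^{-c'D}$. By \eqref{eq:slow-stationarity} and the choice of $A$, every $\eps$-stationary output has $q_j>1/2>1/4$ for all $j$, so every successful output activates all $T$ blocks. The per-handle first-revelation KL is \eqref{eq:handle-kl}, namely $\kappa=CD/R^2$. This rests on the frontier-truncation identity stated before it: the rich response at progress $p$ depends on $\theta_{p+1}$ only through $Z^{(r)}_{p+1}$. Because the rich oracle returns value, gradient, Hessian and the revealed arrays together, it dominates the joint oracle, which covers the final sentence of the theorem. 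Next we need $n_0=\lfloor cD/\kappa\rfloor=\lfloor cR^2/C\rfloor\ge1$, which holds since $R\to\infty$. We set $N=\lfloor Tn_0/2\rfloor$. The condition $D\ge C\log(16(N+1)T)$ then follows from the definition of $D$: we have $N+1\lesssim T R^2\lesssim \Delta s_\star^2/(\eps^3D^{1/4})$ and $T\lesssim\Delta/\eps$, so $\log(16(N+1)T)=\cO\bigl(\log(2+\Delta/\eps+\Delta s_\star^2/\eps^3)\bigr)$, and a large numerical $C$ in $D$ absorbs this.

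\emph{Counting.} The lemma gives that no algorithm with at most
\[
Tn_0/2\asymp\frac{\Delta}{\eps D^{1/4}}\cdot\frac{s_\star^2}{\eps^2}
\]
calls succeeds with probability $2/3$ on every instance, which is the boxed bound. For expected budgets we repeat the truncation argument used in Section~\ref{sec:curvature-lb}. Truncating at six times the expected cost loses at most $1/6$ success probability, and the lemma in fact gives average success below $1/8<1/2$.

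The main obstacle is the bookkeeping in verifying the logarithmic condition on $D$ self-consistently, since $N$ depends on $D$ through $T$ and $\kappa$. A second delicate point is confirming that the uniform constants hold simultaneously as $D$ grows: the factor $\alpha^2/\sqrt D=1$ must exactly neutralize the $D$-dependence in the third derivative. I would check these by writing every quantity as an explicit power of $D$ times a power of $\eps$.
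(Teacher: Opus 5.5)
Your proposal is correct and follows essentially the same route as the paper: admissibility from the slow-chain construction with $A=\frac4{c_0}\eps D^{1/4}$ and $R=c_Rs_\star/\eps$, then the hypotheses of Lemma~\ref{lem:persistent-info} giving $n_0\gtrsim R^2$ and $T\gtrsim\Delta/(\eps D^{1/4})$, then Markov truncation for expected budgets. Your explicit check that $\log(16(N+1)T)$ is absorbed by the constant in $D$ spells out what the paper asserts in a single line.
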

\begin{proof}
Use \eqref{eq:slow-scales}, \eqref{eq:R-slow}, and the scalar loss \eqref{eq:slow-sample}. The construction above verifies every model assumption. Every successful output activates every block by \eqref{eq:slow-stationarity}. Equations \eqref{eq:handle-kl} and Lemma~\ref{lem:persistent-info} give
\begin{equation*}
n_0\ge cR^2\ge c s_\star^2/\eps^2.
\end{equation*}
Also
\begin{equation*}
T\ge c\Delta/A
 \ge c\frac{\Delta}{\eps D^{1/4}}.
\end{equation*}
Thus $Tn_0/2$ has the displayed order. The chosen logarithmic $D$ dominates
$\log((N+1)T)$ throughout the claimed budget range, so the horizon condition in Lemma~\ref{lem:persistent-info} holds. Since every $(\eps,\gamma)$-SOSP is in particular $\eps$-stationary, the same lower bound applies for every $\gamma>0$.

For worst-case expected cost, truncate a purported uniformly successful algorithm at a sufficiently large constant multiple of its expected budget. The truncation loses only a fixed amount of success probability by Markov's inequality, while the fixed-cap proof above can be run with average success bounded below that remaining constant. This changes only numerical constants.
\end{proof}

\subsection{Proof of the Lower Bound}\label{sec:lower-proof}
We now combine the two constructions to prove the main lower bound. The first-order family controls the cost of the gradient condition, and the curvature family controls the cost of the Hessian condition; each family satisfies all assumptions of the model.
\begin{proof}[Proof of Theorem~\ref{thm:lower}]
Let $D_\eps$ and $s_\star$ be as in the theorem. Theorem~\ref{thm:fo-lower}, with $D=D_\eps$, establishes a lower bound
\begin{equation*}
    c_\eps\frac{\Delta s_\star^2}{\eps^3D_\eps^{1/4}}
\end{equation*}
for achieving $\norm{\nabla f(\hat x)}\le\eps$ with probability at least $2/3$. Its dimension $d_\eps$ is at most $C_\eps(1+\Delta D_\eps^{3/4}/\eps)$. Every output satisfying both conditions in~\eqref{eq:success} satisfies this gradient condition, so the same lower bound applies for every $\gamma>0$. Section~\ref{sec:curvature-lb} establishes the independent lower bound $c_\gamma\gamma^{-3}$ for~\eqref{eq:success} in a dimension $d_\gamma$ at most $C_\gamma(1+\gamma^{-4}\log(2/\gamma)+\gamma^{-3})$, for sufficiently small $\eps$ and $\gamma$.

Choose $d_\star=\max\{d_\eps,d_\gamma\}$. For any $d\ge d_\star$, embed either construction by setting $\overline F(x;\xi)=F(Px;\xi)$, where $P$ projects onto its original coordinates. The extra gradient entries and Hessian rows and columns are zero. Hence the initial gap, noise bounds, and all smoothness bounds are preserved, and a successful output for $\overline f$ projects to a successful output for $f$. The additional zero eigenvalues do not alter the condition $\lammin(\nabla^2 f)\ge-\gamma$. Responses in the extra coordinates reveal no information, so the original oracle can simulate the enlarged instance under arbitrary adaptive queries and persistent reuse. The displayed bounds on $d_\eps$ and $d_\gamma$ imply the asserted bound on $d_\star$ after enlarging $C_d$.

An algorithm that succeeds on every admissible instance must therefore have worst-case total budget $Q$ satisfying both
\begin{equation*}
    Q\ge c_\eps\frac{\Delta s_\star^2}{\eps^3D_\eps^{1/4}},
    \qquad Q\ge c_\gamma\gamma^{-3}.
\end{equation*}
Taking $c=\min\{c_\eps,c_\gamma\}$ proves~\eqref{eq:total-lower}. Both component bounds hold when a unit-cost call returns the sampled value, gradient, and complete Hessian jointly. In the curvature construction the sample loss is deterministic, so revisiting a handle supplies the same deterministic responses; the first-order construction explicitly permits arbitrary persistent reuse. Thus the combined lower bound holds for the stronger joint oracle as well. The expected-budget arguments at the ends of Section~\ref{sec:curvature-lb} and Section~\ref{sec:firstorder-lb} establish the same two inequalities for worst-case expected total cost, completing that part of the theorem.
\end{proof}
For fixed positive scale parameters, $D_\eps$ is logarithmic in the inverse tolerance. Since $\max\{a,b\}\ge(a+b)/2$ for $a,b\ge0$, the theorem yields $\widetilde\Omega(\eps^{-3}+\gamma^{-3})$ total queries. Combining it with Theorem~\ref{thm:upper} establishes the rate in Corollary~\ref{cor:total}.

\section{Discussion and Future Directions}\label{sec:discussion}
Corollary~\ref{cor:total} characterizes the total query complexity for independent gradient and curvature tolerances. In particular, for any fixed $a>0$ and $\gamma=\eps^a$,
\begin{equation*}
    Q_{\mathrm{tot}}^\star(\eps,\eps^a)
    =\widetilde\Theta\left(\eps^{-3\max\{1,a\}}\right).
\end{equation*}
The result concerns total evaluations. The achievable pair in~\eqref{eq:Ng}--\eqref{eq:NH} and the marginal $\Omega(\eps^{-2})$ gradient lower bound in Appendix~\ref{app:gradient-test} do not characterize the full gradient--Hessian budget frontier. A Hessian response in the first-order hard family can also carry information about the next hidden block, so a total-call lower bound cannot be assigned entirely to gradients. Determining that two-resource frontier remains a separate problem.
The logarithmic factor in the first-order lower bound arises from the block size and the slow coordinate scale. Removing it would require a construction that simultaneously retains persistent-handle information control and uniform sample Hessian regularity at a smaller scaling cost. Fixed-dimensional sharp lower bounds are also not established here. Finally, the complete-Hessian query convention leaves eigensolver and matrix-storage costs outside the theorem; an HVP-only or arithmetic-complexity characterization would require a different resource analysis.

\bibliographystyle{plainnat}
\bibliography{references}
\clearpage
\appendix
\section{Concentration and Smooth Localization Details}\label{app:technical}
\subsection{Conditional concentration for tree increments}
\begin{proof}[Proof of \cref{lem:accuracy}]
Condition on the history up to the determination of an edge's endpoints. The centered difference for one fresh sample has mean zero and norm at most
\begin{equation*}
2\min\{\sigma,L\norm{y_k-y_{p(k)}}\}.
\end{equation*}
The first bound follows by subtracting the two centered noise vectors or matrices, and the second by sample and population Lipschitzness. Matrix Hoeffding, with self-adjoint dilation in the vector case, shows that the prescribed batch achieves an edge error at most $\eta/(m+1)$ with probability at least $1-\delta/(T+1)$ after increasing $C$. The root is treated similarly. If the increment bound is zero, that increment is exact and needs no concentration argument. A union bound is valid for adaptive endpoints because each concentration bound was conditional on their already being fixed. Along a parent path the population differences telescope exactly. The root error and at most $m$ edge errors add to at most $\eta$ by the triangle inequality. No conditional independence of cached errors given the entire trajectory is being assumed.
\end{proof}
\subsection{Radial extension and bounded sample regularity}
\begin{proof}[Proof of \cref{lem:global}]
The wall is nonnegative, the cutoff is in $[0,1]$, and $\phi_U\in[-\Delta/2,0]$, so $f_U(0)=0$ and $f_U\ge-\Delta/2$. Differentiating the cutoff product gives
\begin{align*}
 \norm{\nabla f_U}&\le b_0+B_1\sqrt\gamma+C\Delta/R,\\
 \norm{\nabla^2f_U}&\le B_2\gamma+
 C(B_1\sqrt\gamma/R+\Delta/R^2+b_0/R),\\
 \operatorname{Lip}(\nabla^2f_U)&\le L_2/4+
 C(B_2\gamma/R+B_1\sqrt\gamma/R^2+\Delta/R^3+b_0/R^2).
\end{align*}
Thus the gradient bound is at most $G$, the Hessian norm at most $L_1$, and the Hessian Lipschitz constant at most $L_2$, after reducing the tolerance threshold. Both noise errors are zero. The wall grows linearly at infinity, so the infimum is finite.

If $r=\norm{x}\ge R$, its radial gradient component obeys
\begin{equation*}
\left\langle\nabla f_U(x),x/r\right\rangle
 =b_0+\chi(r/R)\langle\nabla\phi_U(x),x/r\rangle
   +\frac{\chi'(r/R)}R\phi_U(x)
 \ge b_0-B_1\sqrt\gamma\ge b_0/2.
\end{equation*}
The last cutoff term is nonnegative. Therefore no such point meets the gradient test when $\eps<b_0/2$.

For $r\le R/2$ the wall is zero and the original chain curvature proves the assertion. For $R/2<r<R$, let $j$ be the first incomplete coordinate, so $|u_j^\top x|\le h$. Write $e=x/r$, $c=\langle u_j,e\rangle$, and $w=u_j-ce$. Then $|c|\le2h/R=\cO(\gamma^2)$ and $w$ is tangent to the sphere at $x$. Since $u_j^\top\nabla^2\phi_Uu_j\le-8\gamma$,
\begin{equation*}
w^\top\nabla^2\phi_Uw\le-8\gamma+B_2\gamma(2|c|+c^2)\le-7\gamma
\end{equation*}
for small $\gamma$. The wall's Hessian in the tangent direction contributes at most
\begin{equation*}
\frac{W_R'(r)}r\norm{w}^2\le 2b_0/R\le\gamma.
\end{equation*}
As the cutoff is identically one in this region, $w^\top\nabla^2f_Uw\le-6\gamma$. Normalizing the nonzero vector $w$ still yields a Rayleigh quotient strictly below $-\gamma$. This excludes all incomplete-coordinate outputs inside the ball.
\end{proof}
\subsection{Third-order reverse stick-breaking estimate}
\begin{proof}[Proof of \cref{lem:thirdstick}]
First assume $0<b_i<1$. Introduce independent Bernoulli variables $B_i$ with parameters $b_i(x_i)$ and let
\begin{equation*}
L=\max(\{i:B_i=1\}\cup\{0\}).
\end{equation*}
Then $\Pr(L=i)=w_i$ and $N=\bbE_BV_L$. For a direction $u=(u_i)$, define
\begin{equation*}
s_i=\frac{B_i-b_i}{b_i(1-b_i)},\qquad
 S_u=\sum_i s_iDb_i[u_i].
\end{equation*}
The variables $s_i$ are independent and centered, with $\bbE s_i^2=\beta_i^{-1}$ and $\bbE|s_i|=2$. Consequently
\begin{equation*}
\bbE S_u^2\le \ell_1^2\norm{u}^2.
\end{equation*}
The likelihood ratios for the second and third directional derivatives of the product Bernoulli mass are
\begin{align*}
 K_{u,v}={}&\sum_i s_iD^2b_i[u_i,v_i]
 +\sum_{i\ne j}s_is_jDb_i[u_i]Db_j[v_j],\\
 H_{u,v,w}={}&\sum_i s_iD^3b_i[u_i,v_i,w_i]\\
 &+\sum_{i\ne j}s_is_j(
 D^2b_i[u_i,v_i]Db_j[w_j]
 +D^2b_i[u_i,w_i]Db_j[v_j]
 +D^2b_i[v_i,w_i]Db_j[u_j])\\
 &+\sum_{i,j,k\ \mathrm{distinct}}
 s_is_js_kDb_i[u_i]Db_j[v_j]Db_k[w_k].
\end{align*}
Products $\prod_{i\in I}s_i$ indexed by distinct subsets $I$ are orthogonal in $L_2$. Grouping ordered pairs or triples by their underlying subsets, and using Cauchy--Schwarz, gives for unit directions
\begin{equation*}
\bbE|K_{u,v}|\le C(\ell_2+\ell_1^2),\qquad
 \bbE|H_{u,v,w}|\le C(\ell_3+\ell_1\ell_2+\ell_1^3).
\end{equation*}
For example, the degree-two part of $H$ has squared $L_2$ norm at most a constant times
\begin{equation*}
\left(\sum_i\frac{D^2b_i[u_i,v_i]^2}{\beta_i}\right)
 \left(\sum_j\frac{Db_j[w_j]^2}{\beta_j}\right)
 \le \ell_2^2\ell_1^2,
\end{equation*}
and the degree-three part is bounded by $C\ell_1^6$. The degree-one third-derivative term is bounded in $L_1$ by
$2\ell_3\sum_i\norm{u_i}\norm{v_i}\norm{w_i}\le2\ell_3$.

Differentiating the finite expectation yields
\begin{align*}
 DN[u]={}&\bbE DV_L[u]+\bbE[V_LS_u],\\
 D^2N[u,v]={}&\bbE D^2V_L[u,v]
 +\bbE[DV_L[u]S_v+DV_L[v]S_u]+\bbE[V_LK_{u,v}],\\
 D^3N[u,v,w]={}&\bbE D^3V_L[u,v,w]
 +\bbE\!\left[\sum_{\rm sym}D^2V_L[u,v]S_w\right]\\
 &+\bbE\!\left[\sum_{\rm sym}DV_L[u]K_{v,w}\right]
 +\bbE[V_LH_{u,v,w}].
\end{align*}
The preceding moment estimates prove \eqref{eq:stick1}--\eqref{eq:stick3}; \eqref{eq:stick0} follows from convexity of the weights. Gates attaining zero or one are handled by replacing $b_i$ with $\delta+(1-2\delta)b_i$ and sending $\delta\downarrow0$. The weighted derivative assumptions are preserved under this regularization.
\end{proof}
\subsection{Slow-chain derivatives and the stationarity obstruction}
\begin{proof}[Proof of \cref{lem:slowgeom}]
The range $q_j\in[-2,2]$ and $0\le b\le1$ give $-20T\le h\le0$, while the regularizer is nonnegative and vanishes at zero, proving \eqref{eq:slow-gap}. Direct differentiation gives
\begin{equation*}
\norm{Dq_j}\le2\alpha/\sqrt D,
 \quad \norm{D^2q_j}\le4\alpha^2/D,
 \quad \norm{D^3q_j}\le C\alpha^3/D.
\end{equation*}
The regularizer in one block has Hessian norm at most $8\alpha^2/D$ and third derivative norm at most $C\alpha^3/D$. Every local term of $h$ involves only two neighboring blocks and has bounded derivatives on $[-2,2]^2$. Bounded-degree aggregation proves \eqref{eq:slow-hess}--\eqref{eq:slow-third}. Each block gradient has norm $\cO(A\alpha/\sqrt D)$, which proves \eqref{eq:slow-grad-global} after summing squares over the $T$ blocks.

For \eqref{eq:slow-stationarity}, let $j$ be the first block with $q_j\le1/2$ and write $B=\partial_jh(q)$. Its incoming contribution is at most $-2$, while its outgoing contribution is nonpositive, so $B\le-2$. Set $c=-B\ge2$ and
$r_\ell=\theta_{j\ell}\tanh(\alpha x_{j\ell})\in[-1,1]$. Up to multiplication by the sign $\theta_{j\ell}$, the $\ell$th coordinate of the block gradient equals
\begin{equation*}
\frac{A\alpha}{D}\,z_c(r_\ell),
 \qquad
 z_c(r)=2(cr^2+4r-c).
\end{equation*}
The positive root $r_c=(\sqrt{c^2+4}-2)/c$ is increasing for $c\ge2$ and satisfies $r_c\ge\sqrt2-1$. The other root is below $-1$, and the elementary factorization gives $|z_c(r)|\ge4|r-r_c|$ on $[-1,1]$. Hence
\begin{align*}
 \norm{\nabla_j\Phi_\Theta(x)}
 &\ge\frac{A\alpha}{D\sqrt D}\sum_{\ell}|z_c(r_\ell)|\\
 &\ge\frac{4A\alpha}{\sqrt D}
 \left(r_c-\frac1D\sum_\ell r_\ell\right)
 \ge\frac{4A\alpha}{\sqrt D}(\sqrt2-1-1/4).
\end{align*}
Since $D^{-1}\sum r_\ell=q_j/2\le1/4$, this proves the claim.
\end{proof}
\section{Fixed-Budget Implementation}\label{app:caps}
Take
\begin{equation*}
T=1+\left\lceil48\Delta(d_g^{-1}+d_H^{-1})\right\rceil.
\end{equation*}
This is the iteration cap in Theorem~\ref{thm:upper}, using the values of $d_g$ and $d_H$ in Section~\ref{sec:upper-proof}. Use Lemma~\ref{lem:accuracy} with failure probability $1/96$ for each tree, so their combined failure probability is at most $1/48$. The event that the real run reaches $T$ updates without returning and without an inaccurate estimator has probability at most $1/48$: on this event the shadow has $T$ updates, while its expected count is at most $\Delta(d_g^{-1}+d_H^{-1})$.

Let $\mathcal L=1+\log(C(d+1)(T+1))$. Lemma~\ref{lem:charge} bounds the two expected shadow costs by $C\mathcal L^4B_g$ and $C\mathcal L^4B_H$. Impose the integer caps $Q_g=\lceil48C\mathcal L^4B_g\rceil$ and $Q_H=\lceil48C\mathcal L^4B_H\rceil$ from Theorem~\ref{thm:upper}, aborting before any query that would exceed a cap. Each cap-overflow event on an otherwise accurate trajectory has probability at most $1/48$ by Markov's inequality and coupling with the shadow. This does not apply an expected-cost assertion to a bad-event trajectory. A union bound over estimator failure, iteration timeout, and the two budget overflows leaves success probability at least $1-4/48>2/3$.

\section{A Marginal Gradient Lower Bound}\label{app:gradient-test}
Let $b=\sigma_g/2$ and take sufficiently small $\eps$ so that $2\eps\le b/2$. For $\theta\in\{-2\eps,+2\eps\}$, let $Z\in\{-b,b\}$ satisfy $\bbE_\theta Z=\theta$. Use opaque sample handles for $Z$, and define in dimension one
\begin{equation*}
F_\theta(x;Z)=4\eps\sqrt{1+x^2}+Zx,\qquad
 f_\theta(x)=4\eps\sqrt{1+x^2}+\theta x.
\end{equation*}
Sample derivatives satisfy
\begin{equation*}
F_\theta'(x;Z)=\frac{4\eps x}{\sqrt{1+x^2}}+Z,\qquad
 F_\theta''(x;Z)=\frac{4\eps}{(1+x^2)^{3/2}}.
\end{equation*}
The Hessian is independent of $Z$ and of the hypothesis. The gradient noise is at most $b+2\eps\le\sigma_g$. All sample smoothness constants are $\cO(\eps)$, the population gradient is at most $6\eps$, and
\begin{equation*}
f_\theta(0)-\inf f_\theta=(4-2\sqrt3)\eps.
\end{equation*}
Thus these are admissible for the fixed positive model parameters when $\eps$ is small. The population Hessian is nonnegative everywhere, so the curvature condition is automatic for every $\gamma>0$.

For $\theta=+2\eps$, an $\eps$-stationary point must have $x<0$; for $\theta=-2\eps$ it must have $x>0$. Thus a successful optimization algorithm also distinguishes the two hypotheses with success at least $2/3$.

An initial gradient evaluation on a new handle reveals exactly one $Z$. Repeated evaluations on that handle reveal no additional random information, and any number of Hessian queries reveal none. The per-handle Bernoulli KL divergence between the two hypotheses is $\cO(\eps^2/b^2)$. By the chain rule for the adaptive transcript, its KL divergence is at most $C\eps^2\bbE_+N_g/b^2$; reuses contribute zero conditional divergence. Distinguishability with success at least $2/3$ under both hypotheses requires total variation at least $1/3$, and hence KL divergence bounded below by a numerical constant. Therefore
\begin{equation*}
\boxed{\bbE_+N_g=\Omega(b^2/\eps^2)=\Omega(\eps^{-2}).}
\end{equation*}
The same argument applies to fixed budgets. Padding the construction by unused coordinates embeds it into the lower-bound dimension of the previous section.

\end{document}